\def\@settitle{%
	\vspace*{-20pt}
	\begin{flushleft}%
		\baselineskip14\p@\relax
		\normalfont\bfseries\LARGE
		\@title
	\end{flushleft}%
}
\def\@setauthors{%
	\begingroup
	\def\thanks{\protect\thanks@warning}%
	\trivlist
	\large \@topsep30\p@\relax
	\advance\@topsep by -\baselineskip
	\item\relax
	\author@andify\authors
	\def\\{\protect\linebreak}%
	\authors
	\ifx\@empty\contribs
	\else
	,\penalty-3 \space \@setcontribs
	\@closetoccontribs
	\fi
	\normalfont
	\@setaddresses
	\endtrivlist
	\endgroup
}
\def\@setaddresses{\par
	\nobreak \begingroup\raggedright
	\small
	\def\author##1{\nobreak\addvspace\smallskipamount}%
	\def\\{\unskip, \ignorespaces}%
	\interlinepenalty\@M
	\def\address##1##2{\begingroup
		\par\addvspace\bigskipamount\noindent
		\@ifnotempty{##1}{(\ignorespaces##1\unskip) }%
		{\ignorespaces##2}\par\endgroup}%
	\def\curraddr##1##2{\begingroup
		\@ifnotempty{##2}{\nobreak\noindent\curraddrname
			\@ifnotempty{##1}{, \ignorespaces##1\unskip}\/:\space
			##2\par}\endgroup}%
	\def\email##1##2{\begingroup
		\@ifnotempty{##2}{\smallskip\nobreak\noindent E-mail address%
			\@ifnotempty{##1}{, \ignorespaces##1\unskip}\/:\space
			\ttfamily##2\par}\endgroup}%
	\def\urladdr##1##2{\begingroup
		\def~{\char`\~}%
		\@ifnotempty{##2}{\nobreak\noindent\urladdrname
			\@ifnotempty{##1}{, \ignorespaces##1\unskip}\/:\space
			\ttfamily##2\par}\endgroup}%
	\addresses
	\endgroup
	\global\let\addresses=\@empty
}
\def\@setabstracta{%
	\ifvoid\abstractbox
	\else
	\skip@25\p@ \advance\skip@-\lastskip
	\advance\skip@-\baselineskip \vskip\skip@
	\box\abstractbox
	\prevdepth\z@ 
	\vskip-15pt
	\fi
}
\renewenvironment{abstract}{%
	\ifx\maketitle\relax
	\ClassWarning{\@classname}{Abstract should precede
		\protect\maketitle\space in AMS document classes; reported}%
	\fi
	\global\setbox\abstractbox=\vtop \bgroup
	\normalfont\small
	\list{}{\labelwidth\z@
		\leftmargin0pc \rightmargin\leftmargin
		\listparindent\normalparindent \itemindent\z@
		\parsep\z@ \@plus\p@
		
	}%
	\item[\hskip\labelsep\bfseries\abstractname.]%
}{%
	\endlist\egroup
	\ifx\@setabstract\relax \@setabstracta \fi
}
\def\ps@headings{\ps@empty
	\def\@evenhead{%
		\setTrue{runhead}%
		\normalfont\scriptsize
		\rlap{\thepage}\hfill
		\def\thanks{\protect\thanks@warning}%
		\leftmark{}{}}%
	\def\@oddhead{%
		\setTrue{runhead}%
		\normalfont\scriptsize
		\def\thanks{\protect\thanks@warning}%
		\rightmark{}{}\hfill \llap{\thepage}}%
	\let\@mkboth\markboth
}\ps@headings
\def\section{\@startsection{section}{1}%
	\z@{-1.2\linespacing\@plus-.5\linespacing}{.8\linespacing}%
	{\normalfont\bfseries\Large}}
\def\subsection{\@startsection{subsection}{2}%
	\z@{-.8\linespacing\@plus-.3\linespacing}{.3\linespacing\@plus.2\linespacing}%
	{\normalfont\bfseries\large}}
\def\subsubsection{\@startsection{subsubsection}{3}%
	\z@{.7\linespacing\@plus.1\linespacing}{-1.5ex}%
	{\normalfont\bfseries}}
\def\@secnumfont{\bfseries}
\newtheorem{theorem}{Theorem}[section]
\newtheorem*{theorem*}{Theorem}
\newtheorem{proposition}[theorem]{Proposition}
\newtheorem{corollary}[theorem]{Corollary}
\theoremstyle{definition}
\newtheorem{definition}[theorem]{Definition}
\numberwithin{equation}{section}
\newcommand{\ckh}{{CKh}}
\newcommand{\kh}{{Kh}}
\newcommand{\cdkh}{CDKh}
\newcommand{\dkh}{DKh}
\newcommand{\vckh}{vCKh}
\newcommand{\vkh}{vKh}
\newcommand{\Q}{\mathbb{Q}}
\newcommand{\Z}{\mathbb{Z}}
\newcommand{\vup}{v^{\text{u}}_+}
\newcommand{\vum}{v^{\text{u}}_-}
\newcommand{\vlp}{v^{\text{l}}_+}
\newcommand{\vlm}{v^{\text{l}}_-}
\newcommand{\vulp}{v^{\text{u/l}}_+}
\newcommand{\vulm}{v^{\text{u/l}}_-}
\newcommand{\dpl}{\overset{\bullet}{+}}
\newcommand{\dm}{\overset{\bullet}{-}}
\newcommand{\gH}{\mathfrak H}
\DeclareRobustCommand{\CloseDef}{%
	\leavevmode\unskip\penalty9999 \hbox{}\nobreak\hfill
	\quad\hbox{$\lozenge$}%
}
\begin{document}
	
\vspace*{-50pt}
\title[Additional gradings and invariants of surfaces]{Additional gradings on generalisations of Khovanov homology and invariants of embedded surfaces}

\author{Vassily Olegovich Manturov}
\address{
Bauman Moscow State Technical University and Chelyabinsk State University, Russian Federation
}
\email{\href{mailto:vomanturov@yandex.ru}{vomanturov@yandex.ru}}

\author{William Rushworth}
\address{
	Durham University, United Kingdom
}
\email{\href{mailto:william.rushworth@durham.ac.uk}{william.rushworth@durham.ac.uk}}

\def\subjclassname{\textup{2010} Mathematics Subject Classification}
\expandafter\let\csname subjclassname@1991\endcsname=\subjclassname
\expandafter\let\csname subjclassname@2000\endcsname=\subjclassname
\subjclass{57M25, 57M27, 57N70}

\keywords{virtual Khovanov homology, virtual cobordism, picture-valued invariants}

\begin{abstract}
	We define additional gradings on two generalisations of Khovanov homology (one due to the first author, the other due to the second), and use them to define invariants of various kinds of embeddings. These include invariants of links in thickened surfaces and of surfaces embedded in thickened \(3\)-manifolds. In particular, the invariants of embedded surfaces are expressed in terms of certain diagrams related to the thickened \(3\)-manifold, so that we refer to them as \emph{picture-valued invariants}. This paper contains the first instance of such invariants for \(2\)-dimensional objects.
	
	The additional gradings are defined using cohomological and homotopic information of surfaces: using this information we decorate the smoothings of the standard Khovanov cube, before transferring the decorations into algebra. 
\end{abstract}

\maketitle

\section{Introduction}\label{Sec:introduction}
In this paper we construct additional gradings on two generalisations of Khovanov homology and use them to produce invariants of various kinds of embeddings. We use the functoriality of these generalisations with respect to knot cobordism to produce invariants of surfaces embedded in thickened \(3\)-manifolds. These invariants take the form of module maps, which decompose with respect to a grading derived from certain diagrams related to the thickened \(3\)-manifold into which the surface is embedded. As such they are refered to as \emph{picture-valued invariants}; \Cref{Fig:hgrad21,Fig:twopants,Fig:pantsgrids} (on \cpageref{Fig:hgrad21,Fig:twopants,Fig:pantsgrids}) contain an elucidation of these invariants, the full definition of which is contained in \Cref{Sec:homotopygrading}. These invariants are related to the theory of free knots and cobordisms between them; in contrast to the elementary methods used to study cobordisms of free knots in \cite{Fedoseev2017a,Fedoseev2017}, however, the methods of this paper are more involved.

The Khovanov homology of links in \( S^3 \) - henceforth known as \emph{classical links} - contains geometric information in the form of the Rasmussen invariant, powerful enough to answer complex questions like the Milnor Conjecture \cite{Rasmussen2010}. This geometric information can be employed to produce invariants of surfaces in \( B^4 \) \cite{Bar-natan2005,Jacobsson2004,Tanaka2005}.
 
While other homology theories of classical links - link Floer or instanton homology, for instance - are defined for links in \( 3 \)-manifolds other than \(S^3\), Khovanov homology does not naturally extend to such links. In this paper we focus on links in thickened surfaces and the closely-related virtual links, and use two extensions of Khovanov homology to such links to produce invariants of surfaces embedded in \(4\)-manifolds other than \(B^4\).

\subsection{Extending Khovanov homology}\label{Subsec:extending}
The fundamental problem one encounters when attempting to produce a Khovanov theory of general links in thickened surfaces (or virtual links) is that a new type of edge is present in the cube of resolutions. It is no longer guaranteed that the number of circles in a resolution will increase or decrease by \(1\) along an edge of the cube; it is possible that the number of circles remains constant. Edges linking resolutions with the same number of circles are known as \emph{single-cycle smoothings}. In order to preserve the quantum grading along single-cycle smoothings one must assign the zero map to them. This assignment does not yield a chain complex, however. 

There are at least two distinct methods of constructing a chain complex for virtual links. The first author developed a theory which uses twisted coefficients and other new technology in order to recover a chain complex \cite{Manturov2006}; this theory shall be referred to as \emph{virtual Khovanov homology}. This theory was reformulated by Dye, Kaestner, and Kauffman in order to define a \emph{virtual Rasmussen invariant} \cite{Dye2014}.

The second author altered the module associated to vertices of the cube of resolutions, so that the single-cycle smoothing may be assigned a non-zero map. The resulting theory is known as \emph{doubled Khovanov homology} \cite{Rushworth2017}. A perturbation similar to that of Khovanov homology defined by Lee \cite{Lee2005} exists for doubled Khovanov homology; using this perturbation a \emph{doubled Rasmussen invariant} can be defined.

Both the virtual and doubled Rasmussen invariants are slice obstructions for virtual knots (and hence knots in thickened surfaces), demonstrating that both virtual and doubled Khovanov homology contain geometric information.

In this paper we enhance the sensitivity of these homology theories to the higher dimensional information contained in a link in a thickened surface, using the topology of the surface itself. Specifically, we augment the theories using cohomological and homotopic information. While we do not produce new Rasmussen-style invariants from these enhanced theories, we do obtain new invariants of links in thickened surfaces, and invariants of surfaces in thickened \(3\)-manifolds, as advertised above.

\subsection{Plan of the paper}
The material contained in this paper is split into two distinct categories: the material of \Cref{Sec:gradingdef} is independent to that of \Cref{Sec:homotopygrading}, and the reader can read one without consulting the other. While both sections describe the construction of additional gradings, the homology theories used are distinct.

In \Cref{Sec:review} we recap material we require for the rest of the paper. This includes the construction of the cube of resolutions, and the two generalisations of Khovanov homology which we use to produce invariants. We also review some definitions regarding links in thickened surfaces and virtual links.

In \Cref{Sec:gradingdef} we describe the construction of an additional grading on the virtual and doubled Khovanov homology using the cohomology of surfaces, yielding a triply-graded homology theory. We also discuss two interacting spectral sequences which both begin at this triply-graded theory, and abut to non-isomorphic theories.

In \Cref{Sec:homotopygrading} we describe the construction of another, distinct, grading on virtual Khovanov homology. This grading is produced using homotopy classes of curves on surfaces; these homotopy classes are represented by diagrams, and as such the resulting grading is known as a \emph{picture-valued grading}.

In \Cref{Sec:cobordismmaps} we investigate how the additional gradings defined in the preceeding sections interact with cobordisms between links, before using them to define invariants in \Cref{Subsec:embeddedsurfaces}.

Throughout the paper \( \Sigma_g \) shall denote a closed compact connected oriented surface of genus \( g \). All embeddings are smooth. We also assume some familiarity with the definition of Khovanov homology for classical links (although \Cref{Subsec:classicalkhov} contains a brief overview), particularly in \Cref{Subsec:vkhreview,Subsec:dkhreview}. Finally, we shall use the labelling of virtual knots from Green's table \cite{Green}.

\section{Review}\label{Sec:review}
In this section we give a quick review of material required throughout the paper: the construction of Khovanov homology and two extensions of it used throughout, and some definitions regarding links in thickened surfaces and virtual links.

\subsection{Classical Khovanov homology}\label{Subsec:classicalkhov}
For concreteness we shall describe the construction of Khovanov homology of classical links, including the cube of resolutions and the algebraic complex.

Khovanov homology associates to an oriented classical link a bigraded finitely generated Abelian group \cite{Khovanov1999,Bar-Natan2014}. This is done in two steps. First, given an oriented link diagram \( D \), a formal chain complex is produced which is known as the \emph{cube of resolutions}. Next, this formal chain complex is converted into a algebraic chain complex by applying a topological quantum field theory i.e.\ unions of circles are assigned tensor products of modules, and the edges of the cube are assigned module maps. The chain homotopy equivalence class of the resulting chain complex depends only on the link, \( L \), represented by \( D \). Thus its homology is an invariant of \( L \), and is refered to as the \emph{Khovanov homology of \(L\)}.

We have illustrated these two steps below for a diagram of the Hopf link, where \( \mathcal{A} \) is a module over a ring \( \mathcal{R} \). Reading the figure from top to bottom we see the cube of resolutions associated to the diagram given on the left, then the algebraic complex, denoted \( \ckh \), and finally the Khovanov homology of the given link, denoted \( \kh \).

\begin{center}
	\begin{tikzpicture}[scale=0.7,
roundnode/.style={}]

\node[roundnode] (smooth)at (4,3.5) {\includegraphics[scale=0.5]{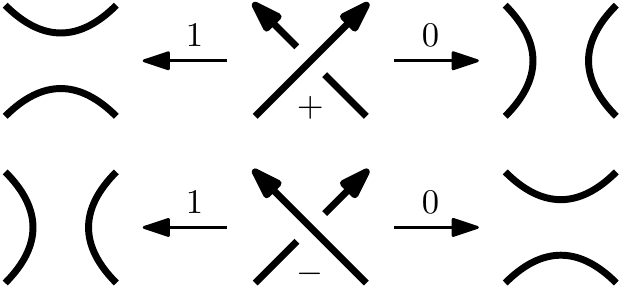}
};

\node[roundnode] (s4)at (-10,0)  {\includegraphics[scale=0.45]{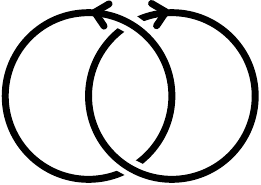}
};

\node[roundnode] (s0)at (-7,0)  {\(\begin{matrix}
	\includegraphics[scale=0.45]{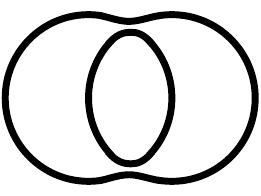} \\
	00
	\end{matrix} \)
};

\node[roundnode] (s1)at (-2,2)  {\(\begin{matrix}
	\includegraphics[scale=0.45]{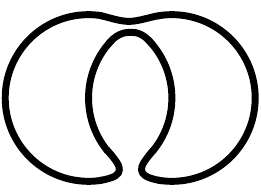} \\
	01
	\end{matrix} \)
};

\node[roundnode] (s2)at (-2,-2)  {\(\begin{matrix}
	\includegraphics[scale=0.45]{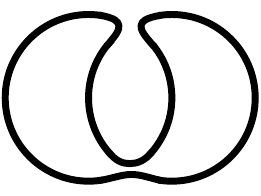} \\
	10
	\end{matrix} \)
};

\node[roundnode] (s3)at (3,0)  {\(\begin{matrix}
	\includegraphics[scale=0.45]{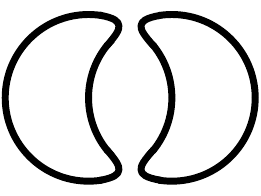} \\
	11
	\end{matrix} \)
};

\draw[->,double] (s4)--(s0) ;

\draw[->,thick] (s0)--(s1) node[above left,pos=0.6]{\( m \)} ;

\draw[->,thick] (s0)--(s2) node[below left,pos=0.6]{\( m \)} ;

\draw[->,thick] (s1)--(s3) node[above right,pos=0.4]{\( \Delta \)} ;

\draw[->,thick] (s2)--(s3) node[below right,pos=0.3]{\( -\Delta \)} ;

\node[roundnode] (s4)at (-10,-5)  {\( \ckh \left( \raisebox{-9pt}{\includegraphics[scale=0.45]{hopf.pdf}} \right) = \)
};

\node[roundnode] (s5)at (-7,-5) {\( \mathcal{A}^{\otimes 2} \)
};

\node[roundnode] (s6)at (-2,-5) {\( \begin{matrix}
	\mathcal{A} \\
	\oplus \\
	\mathcal{A} \\
	\end{matrix} \)
};

\node[roundnode] (s7)at (3,-5) {\( \mathcal{A}^{\otimes 2} \)
};

\draw[->,thick] (s5)--(s6) node[above,pos=0.5]{\small \( d_{-2} =  \begin{pmatrix}
	m \\
	m
	\end{pmatrix} \)
};

\draw[->,thick] (s6)--(s7) node[above,pos=0.5]{\small \( d_{-1} =  \left( \Delta, - \Delta \right) \)
};
\end{tikzpicture}
\end{center}

There is a distinguished basis of \( \ckh ( D ) \) given by \emph{states of \(D\)}: resolutions of \( D \) whose circles are decorated with either \( v_+ \) or \( v_- \). Given such a decorated smoothing, we denote by \( x = v_{ \pm } \otimes v_{ \pm } \otimes \cdots \otimes v_{ \pm } \) the element of \( \ckh ( D ) \) defined by it, and refer to both \( x \) and the decorated smoothing which defines it as a state of \( D \).

We define the bigrading of \( \ckh ( D ) \) on states. First, the \emph{homological grading}, denoted \( i \), is defined as
\begin{equation}
\label{Eq:classicalhom}
	i ( x ) = \# (1-\text{resolutions in the state}~x ) - n_-
\end{equation}
where \( n_- \) is the number of negative crossings of \( D \).

Next, the \emph{quantum grading}, denoted \( j \), is defined as
\begin{equation}
\label{Eq:classicalquant}
	j ( x ) = \# ( v_+\text{'s in the state}~x ) - \# ( v_-\text{'s in the state}~x ) + i ( x ) + wr(D)
\end{equation}
where \( wr(D) \) denotes the writhe of \(D\).

The homology of the chain complex \( \cdkh ( D ) \) is as follows; we have split the resulting group by its bigrading, with the horizontal axis denoting the homological grading, and the vertical axis the quantum grading:
\begin{center}
	\( \kh \left( \raisebox{-8pt}{\includegraphics[scale=0.45]{hopf.pdf}} \right) \quad = \quad	\raisebox{-80pt}{\begin{tikzpicture}[scale=0.8]
	
	\draw[black, ->] (-3.5,-4) -- (3.5,-4) ;
	\draw[black, ->] (-3.5,-4) -- (-3.5,1) ;
	
	\draw (-2,-3.8) -- (-2,-4.2) node[below] {\small $-2$} ;
	\draw (0,-3.8) -- (0,-4.2) node[below] {\small $-1$} ;
	\draw (2,-3.8) -- (2,-4.2) node[below] {\small $0$} ;
	
	\draw (-3.3,-3) -- (-3.7,-3) node[left] {\small $-4$} ;
	\draw (-3.3,-2) -- (-3.7,-2) node[left] {\small $-2$} ;
	\draw (-3.3,-1) -- (-3.7,-1) node[left] {\small $0$} ;
	\draw (-3.3,0) -- (-3.7,0) node[left] {\small $2$} ;
	
	\node[] (-2-3)at(-2,-3) {$ \mathcal{R} $} ;
	\node[] (-2-2)at(-2,-2) {$ \mathcal{R} $} ;
	
	\node[] (21)at(2,-1) {$ \mathcal{R} $} ;
	\node[] (23)at(2,0) {$ \mathcal{R} $} ;
	
	\end{tikzpicture}}
\)
\end{center}

\subsection{Virtual Khovanov homology}\label{Subsec:vkhreview}
As mentioned in \Cref{Subsec:extending}, virtual Khovanov homology proceeds by assigning the zero map to the single-cycle smoothing, while leaving the \(m\) and \( \Delta \) maps unchanged. One must then further modify the construction in order to recover a chain complex.

\begin{figure}
	\includegraphics[scale=1]{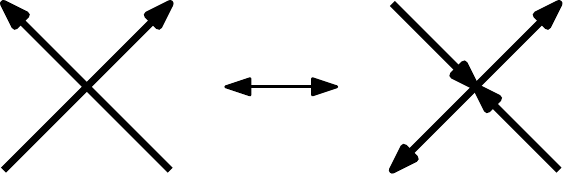}
	\caption{The source-sink decoration.}
	\label{Fig:sourcesink}
\end{figure}

Let \( \mathcal{A} = \mathcal{R}[X]/(X^2-t) \) for \( \mathcal{R} \) a commutative unital ring and \( t \in \mathcal{R} \). The key technique used in the construction of virtual Khovanov homology involves exploiting a symmetry present in \( \mathcal{A} \) (which corresponds to the two possible orientations of \( S^1 \)) using the following automorphism:

\begin{definition}\label{barring}
	The \emph{barring operator} is the \(\mathcal{R}\)-module map
	\begin{equation}
		\overline{\phantom{X}} : \mathcal{A} \rightarrow \mathcal{A},~ X \mapsto -X.
	\end{equation}
	Applying the barring operator is referred to as \emph{conjugation}.\CloseDef
\end{definition}

Note that if \( \mathcal{R} = \mathbb{R} \) and \( t = -1 \) then \( \mathcal{A} = \mathbb{C} \) and the barring operator is just standard complex conjugation. How the barring operator is applied within the Khovanov complex is determined using an extra decoration on link diagrams, the source-sink decoration as depicted in \Cref{Fig:sourcesink}. A new diagram is formed by replacing the classical crossings with the source-sink decoration, which induces an orientation on the incident arcs of a crossing. Loosely speaking, the barring operator is applied when the orientations induced by distinct crossings disagree. We refer the reader to \cite{Manturov2006,Dye2014}.
 
\subsection{Doubled Khovanov homology}\label{Subsec:dkhreview}
Doubled Khovanov homology provides an alternative extension of Khovanov homology to virtual links \cite{Rushworth2017}. The problem presented by the single-cycle smoothing is dealt with by ``doubling up'' the module assigned to a resolution; this allows the map assigned to the single-cycle smoothing, denoted \( \eta \), to be non-zero.

\begin{figure}
	\begin{tikzpicture}[scale=0.8]
	
	\node[] (s0)at (-5,0)  {
		\(\begin{matrix}
		0 \\
		v_+ \\
		0 \\
		v_-
		\end{matrix}\)};
	
	\node[] (s1)at (-3,0)  {
		\(\begin{matrix}
		v_+ \\
		0 \\
		v_- \\
		0
		\end{matrix}\)};
	
	\draw[->,thick] (s0)--(s1) node[above,pos=0.5]{\( \eta \)} ;
	
	\node[] (s2)at (3,0)  {
		\(\begin{matrix}
		0 \\
		\vup \\
		\vlp \\
		\vum \\
		\vlm
		\end{matrix}\)};
	
	\node[] (s3)at (5,0)  {
		\(\begin{matrix}
		\vup \\
		\vlp \\
		\vum \\
		\vlm \\
		0
		\end{matrix}\)};
	
	\draw[->,thick] (s2)--(s3) node[above,pos=0.5]{\( \eta \)} ;
	
	\end{tikzpicture}
	\caption{On the left, the complex of to the single-cycle smoothing if one assigns \( \mathcal{A} \) to a cycle. On the right, the complex of the single-cycle smoothing if one assigns \( \mathcal{A} \oplus \mathcal{A} \lbrace -1 \rbrace \) to a cycle. The generators are arranged vertically by quantum grading.}
	\label{Fig:doublingup}
\end{figure}

A schematic picture of this ``doubling up'' process is given in \Cref{Fig:doublingup}; the left hand complex depicts the situation when the module \( \mathcal{A} \) is assigned to a circle within a resolution. One sees immediately that the \( \eta \) map must be zero if it is to be degree-preserving. This is path followed by the first author and Dye et al, and outlined in the previous section. The right hand complex, however, depicts the situation arrived at if one assigns the module \( \mathcal{A} \oplus \mathcal{A} \lbrace -1 \rbrace \) to a circle, where \( \mathcal{A} = \langle \vup, \vum \rangle \) and \( \mathcal{A} \lbrace -1 \rbrace = \langle \vlp, \vlm \rangle \) (the superscripts are u for ``upper'' and l for ``lower'') \footnote{The module \( \mathcal{A} \) is graded (by the quantum grading), and for \( W \) a graded module \( W_{l-k} = {W \lbrace k \rbrace}_l\)}. This allows for \( \eta \) to be non-zero and degree preserving.

Given a virtual link diagram, \( D \), the complex \( \cdkh ( D ) \) is formed in the usual way: form the cube of resolutions of \( D \), then assign modules to the vertices and maps to the edges. The module assigned to a resolution of \( j \) circles is \( \mathcal{A}^{\otimes j} \oplus  \mathcal{A}^{\otimes j} \lbrace -1 \rbrace \). The maps \( m \) and \( \Delta \) familiar from classical Khovanov homology are essentially unchanged: they do not map between the upper and lower summands. Philosophically, one can think of the \( \eta \) map as ``intertwining'' the two summands together. We refer the reader to \cite{Rushworth2017}.

There is a perturbation of doubled Khovanov homology analogous to that defined by Lee in the classical case: adding a term to the differential of quantum degree \(+4\) yields a new homology theory, known as \emph{doubled Lee homology}. Unlike in the case of classical Lee homology, there exist virtual links which have trivial doubled Lee homology. The poses problems when using doubled Lee homology to investigate cobordism and concordance. The typical method is to employ the functoriality of the homology theory, and associate maps between homologies to cobordisms between links. One must take care to ensure that the homology of every virtual link appearing in a cobordism has non-trivial doubled Lee homology; otherwise, the map associated to the cobordism will be the zero map.

\subsection{Links in thickened surfaces and virtual links}\label{Subsec:thickenedreview}
Here we gather together some definitions regarding two closely related types of objects: links in thickened surfaces and virtual links. For more details we refer the reader to \cite{Turaev2007,Kauffman1998}.

\begin{definition}\label{Def:thickenedlink}
	A \emph{link in a thickened surface} is an embedding \( \bigsqcup S^1 \hookrightarrow \Sigma_g \times I \), considered up to isotopy. A \emph{link diagram on a thickened surface} (simply a \emph{diagram} if the usage is unambiguous) is a \(4\)-valent graph on \( \Sigma_g \), such that each vertex is decorated with the standard undercrossing or overcrossing; an example is given in \Cref{Fig:21lift}. A knot in a thickened surface is a one component link in a thickened surface.\CloseDef
\end{definition}

Two link diagrams represent the same link if and only if they are related by a finite sequence of Reidemeister moves (these moves are identical to those of classical knot theory). Given a link diagram on a thickened surface, it may be possible to remove all of its crossings via Reidemeister moves but not to convert it into a union of disjoint \emph{contractible} loops. For example, consider the meridian of a torus: it has no crossings but is not contractible (nor is any diagram related to it by Reidemeister moves). In what follows we shall refer to a link as \emph{crossingless} if it possesses a diagram with no crossings, and reserve the term \emph{unlink} for a link which possesses a diagram which is a collection of disjoint contractible simple curves. An \emph{unknot} is an unlink of one component.

\begin{definition}\label{Def:cobordism}
	Let \( K \hookrightarrow \Sigma_g \) and \( K' \hookrightarrow \Sigma_{g'} \) be knots in thickened surfaces. A \emph{cobordism from \( K \) to \( K' \)} is a compact connected oriented surface \( S \hookrightarrow M \times I \), where \( \partial S = K \sqcup K' \) and \( M \) is a compact oriented \(3\)-manifold such that \( \partial M = \Sigma_g \sqcup \Sigma_{g'} \). If the genus of \( S \) is zero we say that \( K \) and \( K' \) are \emph{concordant}.\CloseDef
\end{definition}

In what follows, we shall focus our attention on cobordisms which are embedded into \(4\)-manifolds of the form \( \Sigma_g \times I \times I \).

A \emph{virtual link} is an equivalence class of links in thickened surfaces up autodiffeomorphism of the surface and handle stabilisations whose attaching spheres do not intersect the link. They can be represented by \emph{virtual link diagrams}: link diagrams with an extra crossing decoration, the virtual crossing \raisebox{-2.5pt}{\includegraphics[scale=0.3]{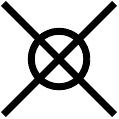}}. An example is given in \Cref{Fig:21lift}. A virtual link is an equivalence class of virtual link diagrams up to the virtual Reidemeister moves; these moves contain the three classical Reidemeister moves, plus four new moves involving virtual crossings.

\Cref{Def:cobordism} applies to virtual knots also; in particular, it is clear that two representatives of a virtual knot as a knot in a thickened surface are concordant.

\section{Gradings from cohomology}\label{Sec:gradingdef}
In this section we describe the construction of additional gradings on doubled Khovanov homology of a link in a thickened surface, using the first cohomology of the surface to produce the grading. For the definition in the case of virtual Khovanov homology we refer the reader to \cite{Manturov2008a}.

In \Cref{Sec:cobordismmaps} we describe the assignment of maps on homology to cobordisms between links, and in \Cref{Subsec:dotteddoubledinvariants} use them to obtain an obstruction to knots in \( \Sigma_g \times I \) bounding a disc in \( \Sigma_g \times I \times I \).

\subsection{Decorating the cube of resolutions}\label{Subsec:decorating}
Conceptually, we wish to make use of the way in which an embedding \( \bigsqcup S^1 \hookrightarrow \Sigma_g \times I \) is knotted around \( \Sigma_g \) in order to enchance the standard doubled Khovanov complex. Doubled Khovanov homology is quite insensitive to this information, as evidenced by its invariance under the purely virtual Reidemeister moves and the move known as flanking \cite{Rushworth2017}.

We begin by decorating the cube of resolutions in a manner which captures some of this information.

\begin{definition}[The dotted cube of resolutions]
	\label{Def:dottedcube}
	Let \( D \) be a diagram of an oriented link in a thickened surface \( L \hookrightarrow \Sigma_g \times I \). Form the cube of resolutions of \( D \) in the same way as for a virtual link diagram: resolutions are embeddings of disjoint unions of \( S^1 \) into \( \Sigma_g \). An example is given in \Cref{Fig:d21cube}.
	
	Pick an element \( c \in H^1 ( \Sigma_g ; \Z_2) \). Decorate the cube of resolutions as follows: a circle within a resolution is decorated with a \emph{dot} if it has non-zero image under \( c \). The assignment of dots to all circles of all resolutions withing the cube is referred to as the \emph{dotting associated to \( c \)}.
	
	Two examples of dottings are given in \Cref{Fig:d21cube}; green dots represent the dotting associated to the element of \( H^1 ( \Sigma_g ; \Z_2) \) coloured green, and the element coloured red does not produce any dots. The fully decorated cube is refered to as the \emph{dotted cube of resolutions of \( D \) with respect to \( c \)}, denoted \( \llbracket D, h \rrbracket \).\CloseDef
\end{definition}

Of course, the dotted cube of resolutions defined above depends on the choice of \( c \in H^1 ( \Sigma_g ; \Z_2) \). We shall show that the resulting homology is an invariant of the pair \( (L, c) \).

\begin{figure}
	\includegraphics[scale=0.75]{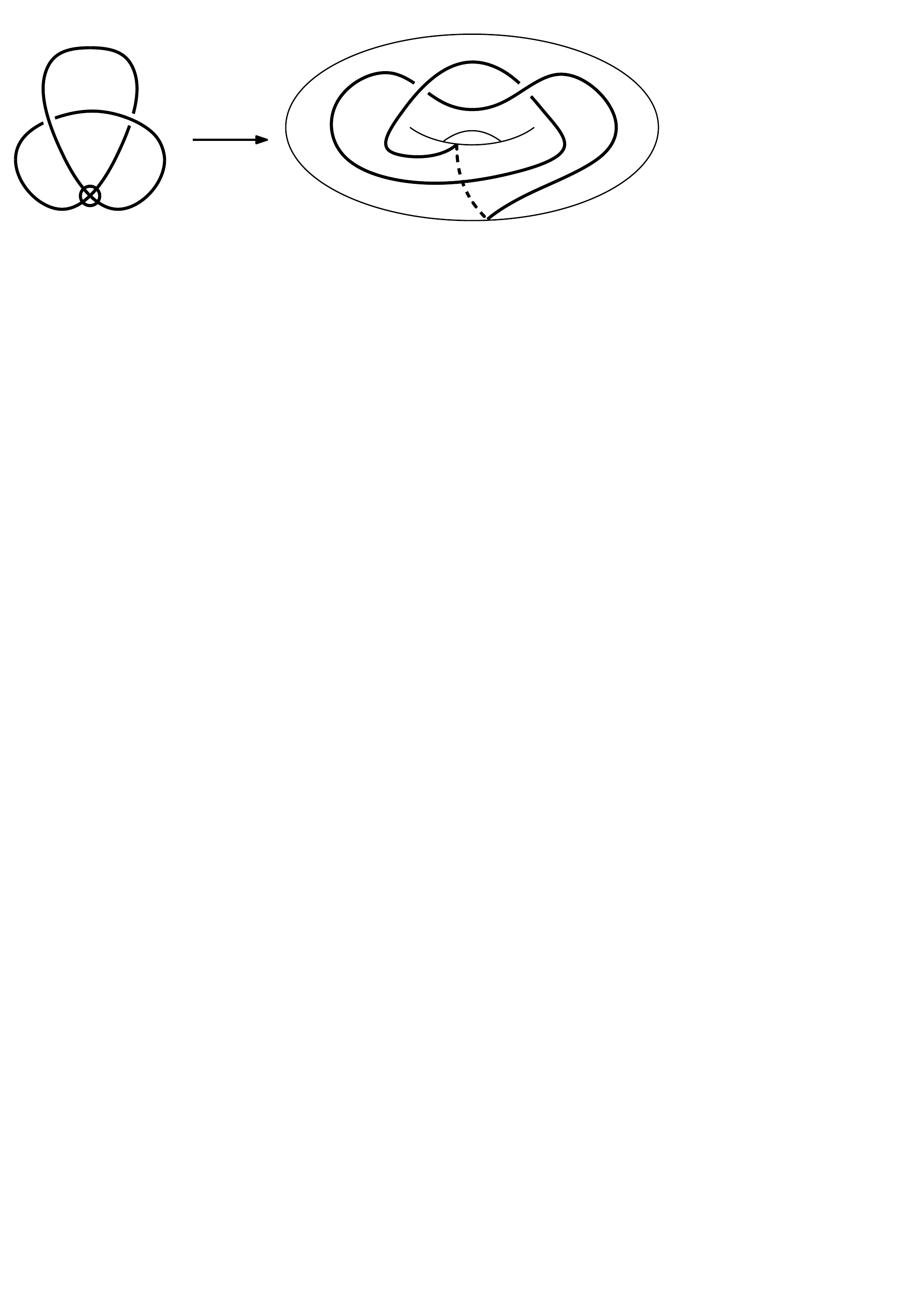}
	\caption{A knot in \( \Sigma_1 \times I \) which is a lift of virtual knot \( 2.1 \).}
	\label{Fig:21lift}
\end{figure}

\begin{figure}
	\begin{tikzpicture}[scale=1,
	roundnode/.style={}]
	
	\node[roundnode] (s0)at (-6.5,0)  {\includegraphics[scale=0.5]{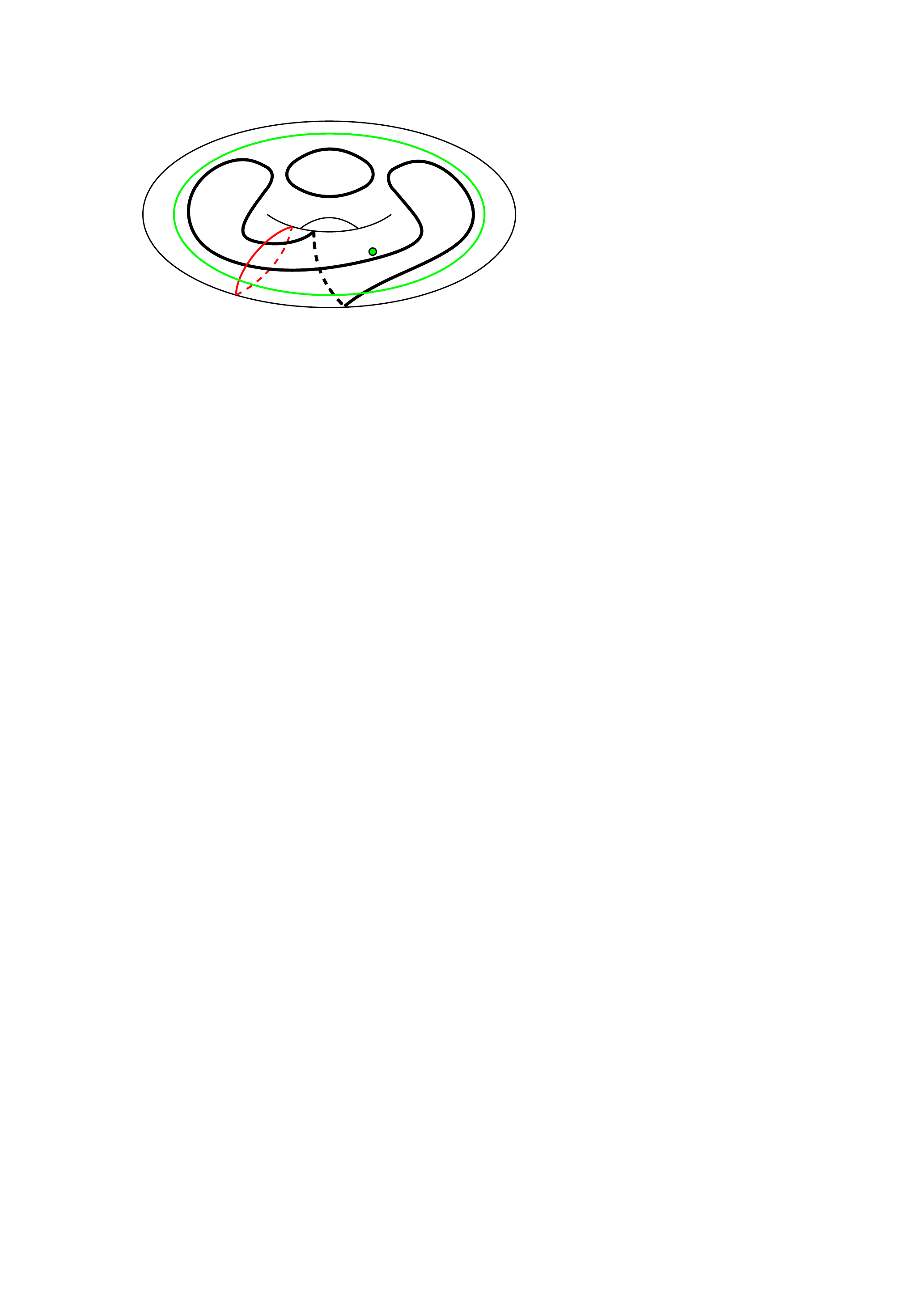}
	};
	
	\node[roundnode] (s1)at (-2,2.75)  {\includegraphics[scale=0.5]{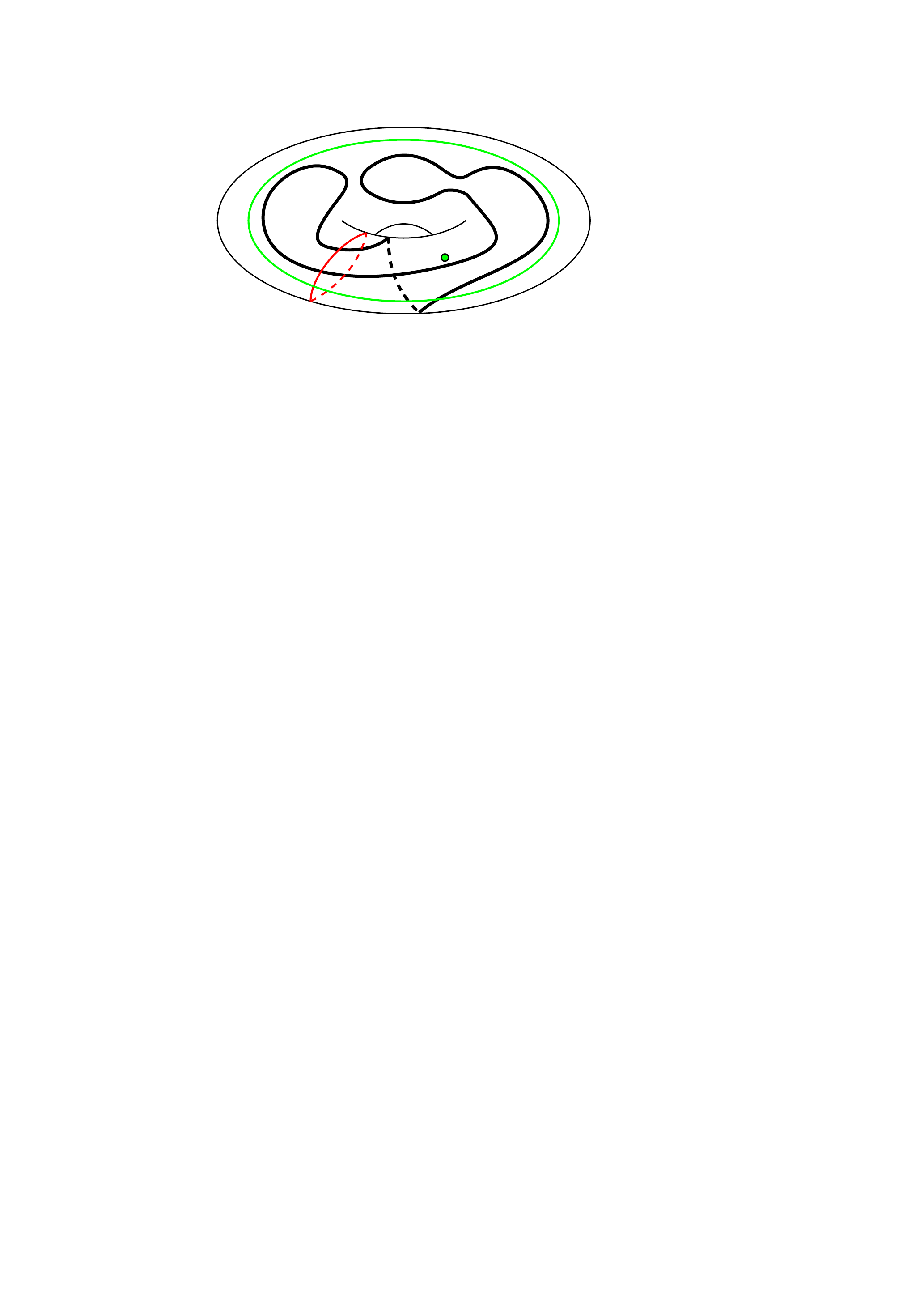}
	};
	
	\node[roundnode] (s2)at (-2,-2.75)  {\includegraphics[scale=0.5]{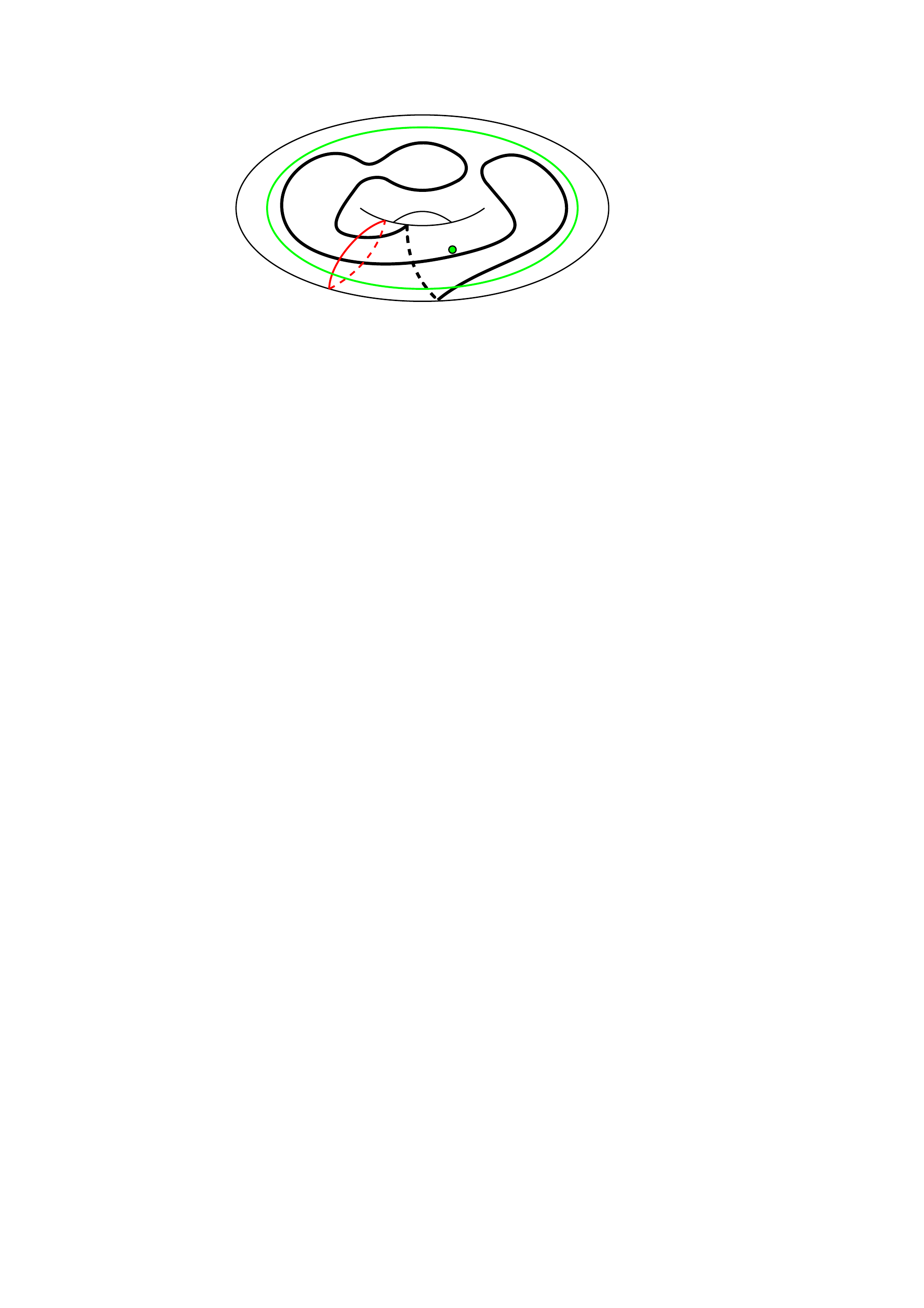}
	};
	
	\node[roundnode] (s3)at (2.5,0)  {\includegraphics[scale=0.5]{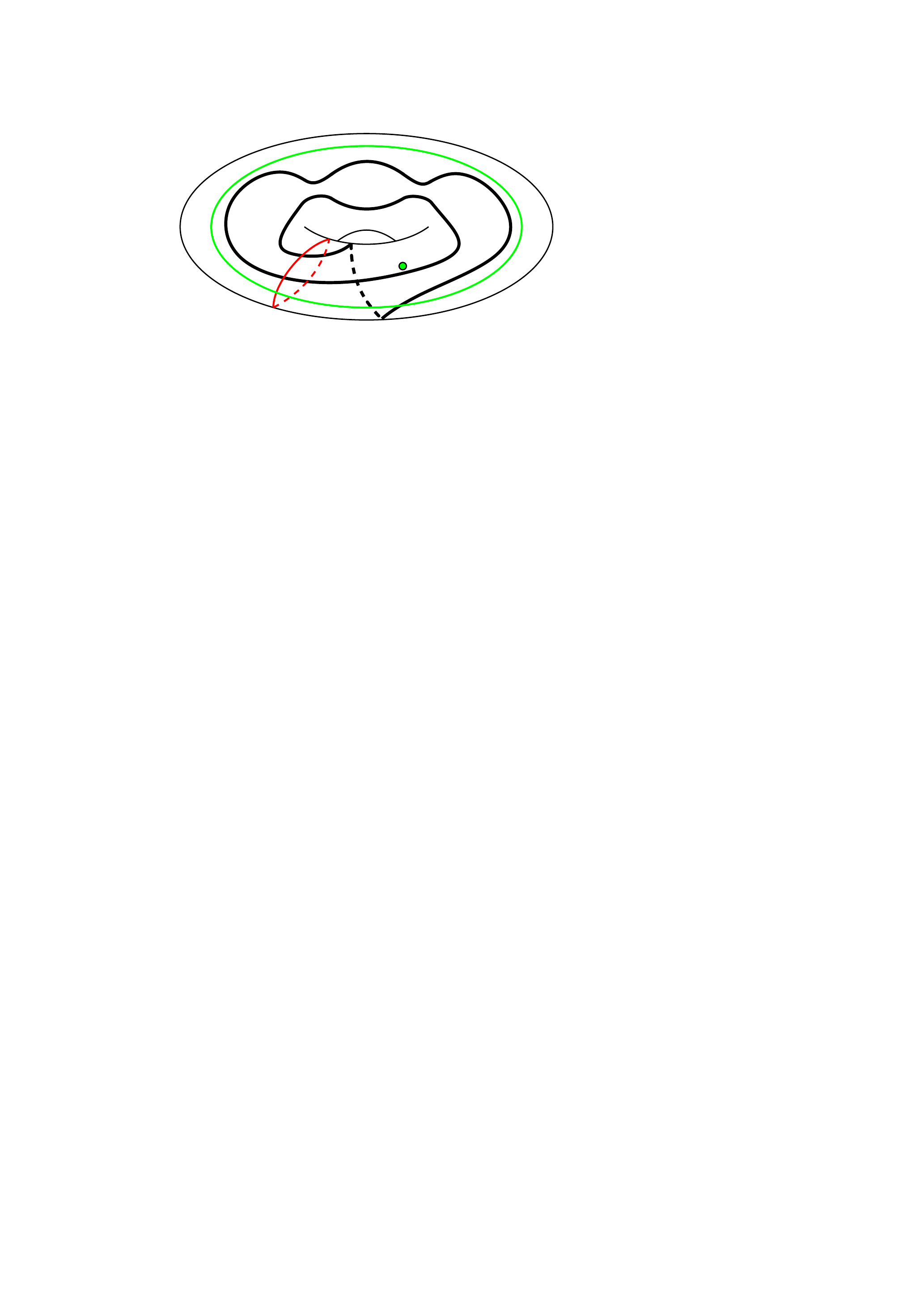}
	};
	
	\draw[->,thick] (s0)--(s1) node[above left,pos=0.6]{\( m \)} ;
	
	\draw[->,thick] (s0)--(s2) node[below left,pos=0.6]{\( m \)} ;
	
	\draw[->,thick] (s1)--(s3) node[above right,pos=0.4]{\( \eta \)} ;
	
	\draw[->,thick] (s2)--(s3) node[below right,pos=0.3]{\( -\eta \)} ;
	\end{tikzpicture}
	\caption{The dotted cube of resolutions of the diagram depicted in \Cref{Fig:21lift}.}
	\label{Fig:d21cube}
\end{figure}

\subsection{Doubled Khovanov homology with dots}\label{Subsec:dotteddkk}
We now incorporate the higher dimensional information, in the form of dots, into the algebraic complex. We keep track of the dots of the resolutions by placing a dot above the module associated to a dotted circle.

\begin{definition}[The dotted doubled Khovanov complex]
	\label{Def:dottedcomplex}
	Let \( D \) be a diagram of an oriented link in a thickened surface \( L \hookrightarrow \Sigma_g \times I \). Pick an element \( c \in H^1 ( \Sigma_g ; \Z_2) \) and form the dotted cube \( \llbracket D, c \rrbracket \) as in \Cref{Def:dottedcube}. We form the \emph{doubled Khovanov complex of \(D\) with respect to \(c\)} in the manner as outlined in \Cref{Subsec:dkhreview}, but augmented by adding dots above modules assigned to circles which are dotted. These dots persist to elements of the dotted module; that is, we denote the elements of \( \overset{\bullet}{\mathcal{A}} \) as \( v_{\dpl} \) and \( v_{\dm} \).
	
	As in unaugmented Khovanov homology, the components of the differential are matrices of the appropriate maps, which are assigned signs in the standard way. The resulting chain complex is denoted \( \cdkh ( D, c ) \), and an example of such a complex is given in \Cref{Fig:dottedcomp}.\CloseDef
\end{definition}

\begin{definition}
	\label{Def:hgrading}
	Let \( D \) be a diagram of an oriented link in a thickened surface \( L \hookrightarrow \Sigma_g \times I \) and \( \cdkh  ( D, c ) \) its dotted doubled Khovanov complex with respect to \( c \in H^1 ( \Sigma_g ; \Z_2) \). By an abuse of notation we denote by \( c \) both the cohomology class and a \( \Z [ \frac{1}{2}] \)-grading on \( \cdkh  ( D, c ) \) defined in the following manner. Given \( x \in \overset{ \left( \bullet \right) }{\mathcal{A}} \otimes \overset{ \left( \bullet \right) }{\mathcal{A}} \otimes \cdots \otimes \overset{ \left( \bullet \right) }{\mathcal{A}} \) (where the copies of \( \mathcal{A} \) may or may not be dotted) define
		\begin{equation}
			c ( x ) \coloneqq \# \left( v_{\dm} \right) - \# \left( v_{\dpl} \right) + \frac{1}{2} j ( x )
		\end{equation}
	where \( \# \left( v_{\dpl} \right) \) denotes the number of \( v_{\dpl} \) in \( x \) (likewise \( \# \left( v_{\dm} \right) \) the number of \( v_{\dm} \) ), and \( j \) the standard quantum degree. We refer to this grading as the \emph{\( c \)-grading}.\CloseDef
\end{definition}

Of course, the \( c \)-grading contains no new information if \( c \) is a trivial cohomology class or if no circles within the cube of resolutions are assigned dots (see \Cref{Prop:trivialdots}).

\begin{figure}
	\begin{tikzpicture}[scale=0.6,
	roundnode/.style={}]
	\node[roundnode] (s5)at (-8.5,-7) {\( \begin{matrix}
		\mathcal{A} \otimes \overset{{\color{green} \bullet}}{\mathcal{A}} \\
		\oplus \\
		\mathcal{A} \otimes \overset{{\color{green} \bullet}}{\mathcal{A}} \lbrace -1 \rbrace \\
		\end{matrix} \)
	};
	
	\node[roundnode] (s6)at (-1.5,-7) {\( \begin{matrix}
		\overset{{\color{green} \bullet}}{\mathcal{A}} \\
		\oplus \\
		\overset{{\color{green} \bullet}}{\mathcal{A}} \lbrace -1 \rbrace \\
		\oplus \\
		\overset{{\color{green} \bullet}}{\mathcal{A}} \\
		\oplus \\
		\overset{{\color{green} \bullet}}{\mathcal{A}} \lbrace -1 \rbrace
		\end{matrix} \)
	};
	
	\node[roundnode] (s7)at (5.5,-7) {\( \begin{matrix}
		\overset{{\color{green} \bullet}}{\mathcal{A}} \\
		\oplus \\
		\overset{{\color{green} \bullet}}{\mathcal{A}} \lbrace -1 \rbrace \\
		\end{matrix} \)
	};
	
	\node[roundnode] (s8)at(-8.5,-10.5) {\( -2 \)};
	
	\node[roundnode] (s9)at(-1,-10.5) {\( -1 \)};
	
	\node[roundnode] (s10)at(5.5,-10.5) {\( 0 \)};
	
	\draw[->,thick] (s5)--(s6) node[above,pos=0.5]{\( d_{-2} =  \begin{pmatrix}
		m \\
		m
		\end{pmatrix} \)
	};
	
	\draw[->,thick] (s6)--(s7) node[above,pos=0.5]{\( d_{-1} =  \left( \eta, - \eta \right) \)
	};
	\end{tikzpicture}
	\caption{The dotted complex associated to the cube of resolutions depicted in \Cref{Fig:d21cube}, with respect to the green cohomology class.}
	\label{Fig:dottedcomp}
\end{figure}

The components of the differential of \( \cdkh ( D, c ) \) split with respect to the \( c \)-grading as
\begin{equation*}
	d = d^0 + d^{+2}
\end{equation*}
where \( d^i \) is \(c\)-graded of degree \(i\). For the \( m \), \( \Delta \), and \( \eta \) maps, the particular splitting depends on the configuration of the dots assigned to the circles involved. We shall denote by \( \underline{\phantom{X}} \rightarrow \bullet \otimes \bullet \) a \( \Delta \) map taking an undotted circle to two dotted circles, and so forth. We have suppressed the \( \text{u} / \text{l} \) superscripts for the \( m \) and \( \Delta \) maps (as they do not interact with them). (Terms in parentheses denote components of the differential of doubled Lee homology, and are required later.)

\begin{equation}\label{Eq:dkkdiff1}
	m : \bullet \otimes \bullet \rightarrow \underline{\phantom{X}} \left\lbrace
	\begin{aligned}
		v_{\dpl \dpl} & \xrightarrow{m^{0}} 0 \qquad & v_{\dpl \dpl} & \xrightarrow{m^{+2}} v_+ \\
		v_{\dpl \dm}, v_{\dm \dpl} & \xrightarrow{m^{0}} v_- \qquad & v_{\dpl \dm}, v_{\dm \dpl} & \xrightarrow{m^{+2}} 0 \\
		v_{\dm \dm} & \xrightarrow{m^{0}} 0 ~ ( v_+ ) \qquad & v_{\dm \dm} & \xrightarrow{m^{+2}} 0
	\end{aligned}\right.	
\end{equation}

\begin{equation}\label{Eq:dkkdiff2}
	m : \bullet \otimes \underline{\phantom{X}} \rightarrow \bullet \left\lbrace
	\begin{aligned}
		v_{\dpl +} & \xrightarrow{m^{0}} v_{\dpl} \qquad & v_{\dpl +} & \xrightarrow{m^{+2}} 0 \\
		v_{\dpl -} & \xrightarrow{m^{0}} 0 \qquad & v_{\dpl -} & \xrightarrow{m^{+2}} v_{\dm} \\
		v_{\dm +} & \xrightarrow{m^{0}} v_{\dm} \qquad & v_{\dm +} & \xrightarrow{m^{+2}} 0 \\
		v_{\dm -} & \xrightarrow{m^{0}} 0 ~ ( v_{\dpl} ) \qquad & v_{\dm \dm} & \xrightarrow{m^{+2}} 0
	\end{aligned}\right.	
\end{equation}
In the case \( m : \underline{\phantom{X}} \otimes \underline{\phantom{X}} \rightarrow \underline{\phantom{X}} \) we have \( m^{+2} = 0 \) so that \( m^0 = m \).

\begin{equation}\label{Eq:dkkdiff3}
	\Delta : \bullet \rightarrow \bullet \otimes \underline{\phantom{X}} \left\lbrace
	\begin{aligned}
		v_{\dpl} & \xrightarrow{\Delta^{0}} v_{\dpl -} \qquad & v_{\dpl} & \xrightarrow{\Delta^{+2}} v_{\dm +} \\
		v_{\dm} & \xrightarrow{\Delta^{0}} v_{\dm -} ~ ( + v_{\dpl +} ) \qquad & v_{\dm} & \xrightarrow{\Delta^{+2}} 0
	\end{aligned}\right.	
\end{equation}

\begin{equation}\label{Eq:dkkdiff4}
	\Delta : \underline{\phantom{X}} \rightarrow \bullet \otimes \bullet \left\lbrace
	\begin{aligned}
		v_{+} & \xrightarrow{\Delta^{0}} v_{\dpl \dm} + v_{\dm \dpl} \qquad & v_{+} & \xrightarrow{\Delta^{+2}} 0 \\
		v_{-} & \xrightarrow{\Delta^{0}} 0 ~ ( v_{\dpl \dpl} ) \qquad & v_{-} & \xrightarrow{\Delta^{+2}} v_{\dm \dm}
	\end{aligned}\right.	
\end{equation}
Again, for \( \Delta : \underline{\phantom{X}} \rightarrow \underline{\phantom{X}} \otimes \underline{\phantom{X}} \) we have \( \Delta^{+2} = 0 \).

\begin{equation}\label{Eq:dkkdiff5}
	\eta : \bullet \rightarrow \bullet \left\lbrace
	\begin{aligned}
		v^{\text{u}}_{\dpl} & \xrightarrow{\eta^{0}} v^{\text{l}}_{\dpl} \qquad & v^{\text{u}}_{\dpl} & \xrightarrow{\eta^{+2}} 0 \\
		v^{\text{l}}_{\dpl} & \xrightarrow{\eta^{0}} 0 \qquad & v^{\text{l}}_{\dpl} & \xrightarrow{\eta^{+2}} 2 v^{\text{u}}_{\dm} \\
		v^{\text{u}}_{\dm} & \xrightarrow{\eta^{0}} v^{\text{l}}_{\dm} \qquad & v^{\text{u}}_{\dm} & \xrightarrow{\eta^{+2}} 0 \\
		v^{\text{l}}_{\dm} & \xrightarrow{\eta^{0}} 0 ~ ( 2 v^{\text{u}}_{\dpl} ) \qquad & v_{\dm \dm} & \xrightarrow{\eta^{+2}} 0
	\end{aligned}\right.	
\end{equation}
Finally, for \( \eta : \underline{\phantom{X}} \rightarrow \underline{\phantom{X}} \) we have \( \eta^{+2} = 0 \), as usual.

\begin{theorem}
	\label{Thm:dothominvariance}
	Let \( D \) be a diagram of an oriented link in a thickened surface \( L \hookrightarrow \Sigma_g \times I \) and \( \cdkh  ( D, c ) \) its dotted doubled Khovanov complex with respect to \( c \in H^1 ( \Sigma_g ; \Z_2) \). The homology of \( \cdkh ( D, c ) \) with respect to \( d^0 \), the \(c\)-grading preserving component of the differential, is well-defined and is an invariant of \( L \). It is denoted by \( \dkh ( L, c ) \), and refered to as the \emph{doubled Khovanov homology of \( L \) with respect to \(c\)}.
\end{theorem}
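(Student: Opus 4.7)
The plan is to prove the theorem in two stages: first verifying that $(d^0)^2 = 0$, then establishing Reidemeister invariance.

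For the first stage, note that the dotted differential $d$ of $\cdkh(D, c)$ agrees, as a module map, with the undotted doubled Khovanov differential of the underlying virtual diagram; the dots are a bookkeeping device attached to the modules and do not modify the maps $m$, $\Delta$, or $\eta$. Hence $d^2 = 0$ is inherited from the fact that the undotted complex is a chain complex. Now decompose $d = d^0 + d^{+2}$ according to the $c$-grading splitting spelled out in \eqref{Eq:dkkdiff1}--\eqref{Eq:dkkdiff5}. Expanding $d^2 = 0$ and separating contributions by $c$-degree (into degrees $0$, $+2$, and $+4$) yields the three identities $(d^0)^2 = 0$, $d^0 d^{+2} + d^{+2} d^0 = 0$, and $(d^{+2})^2 = 0$; in particular $d^0$ squares to zero.

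For the second stage, I would treat the $c$-grading as a descending filtration $F^p \cdkh(D, c) = \bigoplus_{c(x) \geq p} \mathcal{R} \langle x \rangle$. Since $d = d^0 + d^{+2}$ increases the $c$-grading by $0$ or $2$, $d$ respects the filtration, and the differential induced on the associated graded complex is exactly $d^0$. Consequently $H_*(\cdkh(D, c), d^0)$ is the $E_1$-page of the associated spectral sequence, so to prove invariance of $L$ it suffices to show that the standard chain homotopy equivalences establishing Reidemeister invariance of doubled Khovanov homology in \cite{Rushworth2017} can be chosen to be filtered with respect to $c$. The crucial geometric observation is that every Reidemeister move (classical, virtual, or mixed) takes place inside a disc $D^2 \subseteq \Sigma_g$ on which $c$ restricts to zero; any circle in a resolution wholly contained in $D^2$ is therefore undotted, while any circle crossing $\partial D^2$ inherits its dotting from its behaviour outside the disc, which is unchanged by the move. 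This local triviality implies that the homotopy formulas of \cite{Rushworth2017}, being built from the same local pieces as $d$, split as $f = f^0 + f^{>0}$ with $f^0$ preserving the $c$-grading and $f^{>0}$ strictly increasing it.

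The main obstacle is the case-by-case verification, for each Reidemeister move, that the relevant homotopy formulas actually respect the filtration, and that the $f^0$ components assemble into an honest chain homotopy equivalence between the $d^0$-complexes. Special attention is needed for the mixed moves involving both classical and virtual crossings, and for the move known as flanking, since these are where the single-cycle smoothing map $\eta$ enters and where the decomposition of the $c$-grading is most delicate. Once such a filtered equivalence is in hand, the general theory of filtered chain complexes guarantees that the induced maps on associated graded complexes are chain homotopy equivalences, so that $H_*(\cdkh(D, c), d^0)$ depends only on $L$ and $c$, justifying the notation $\dkh(L, c)$.
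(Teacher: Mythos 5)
Your proposal is correct in substance and rests on the same key observation as the paper's proof: every circle appearing in the local tangles of a Reidemeister move is contractible in \( \Sigma_g \), hence undotted, so the local maps are unaffected by the dotting. The packaging differs. The paper does not pass through a filtration argument at all: having noted that the local circles are undotted, it directly runs the standard Bar-Natan simplification (delooping/Gaussian elimination) on the \( d^0 \)-complex itself, which works because the components \( m^0 \) and \( \Delta^0 \) involving an undotted contractible circle (see \eqref{Eq:dkkdiff2} and \eqref{Eq:dkkdiff3}) remain surjective and injective respectively, referring to \cite{Manturov2008a} for details. Your route --- view the \( c \)-grading as a filtration, identify \( d^0 \) as the associated graded differential, and show the homotopy equivalences of \cite{Rushworth2017} are filtered with filtered homotopies, so that their degree-zero parts give equivalences of associated graded complexes --- is valid and arguably makes the logical structure (and the relation to the spectral sequences of the following subsection) more transparent, at the cost of the case-by-case verification you correctly flag; your preliminary check that \( (d^0)^2 = 0 \) by separating \( d^2 = 0 \) into \( c \)-homogeneous pieces is also a worthwhile addition the paper leaves implicit. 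One small correction: the theorem concerns links in a fixed thickened surface, where equivalence is generated by the three classical Reidemeister moves alone (diagrams on \( \Sigma_g \) have no virtual crossings, and the theory is deliberately \emph{not} invariant under stabilisation), so the special attention you reserve for mixed moves and for flanking is unnecessary here.
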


\begin{proof}
	We need only show that \( \dkh ( L , c ) \) is invariant under the Reidemeister moves. Consider the two cubes of resolutions associated to the two tangle diagrams involved in a Reidemeister move; in order for \( \dkh ( L , c ) \) to be an invariant we must have that any circles appearing in the cubes are not assigned dots. This holds as they are necessarily homologically trivial. We can then apply the now standard Bar-Natan proof \cite{Bar-Natan2002,Manturov2013}; this hinges on the fact that \( m \) is surjective and \( \Delta \) injective. For full details we refer the reader to \cite[Section \(4\)]{Manturov2008a}.
\end{proof}

An example of the resulting homology is given in \Cref{Fig:21dottedhom}.

\begin{figure}
	\( \dkh \left( ~ \raisebox{-15pt}{\includegraphics[scale=0.3]{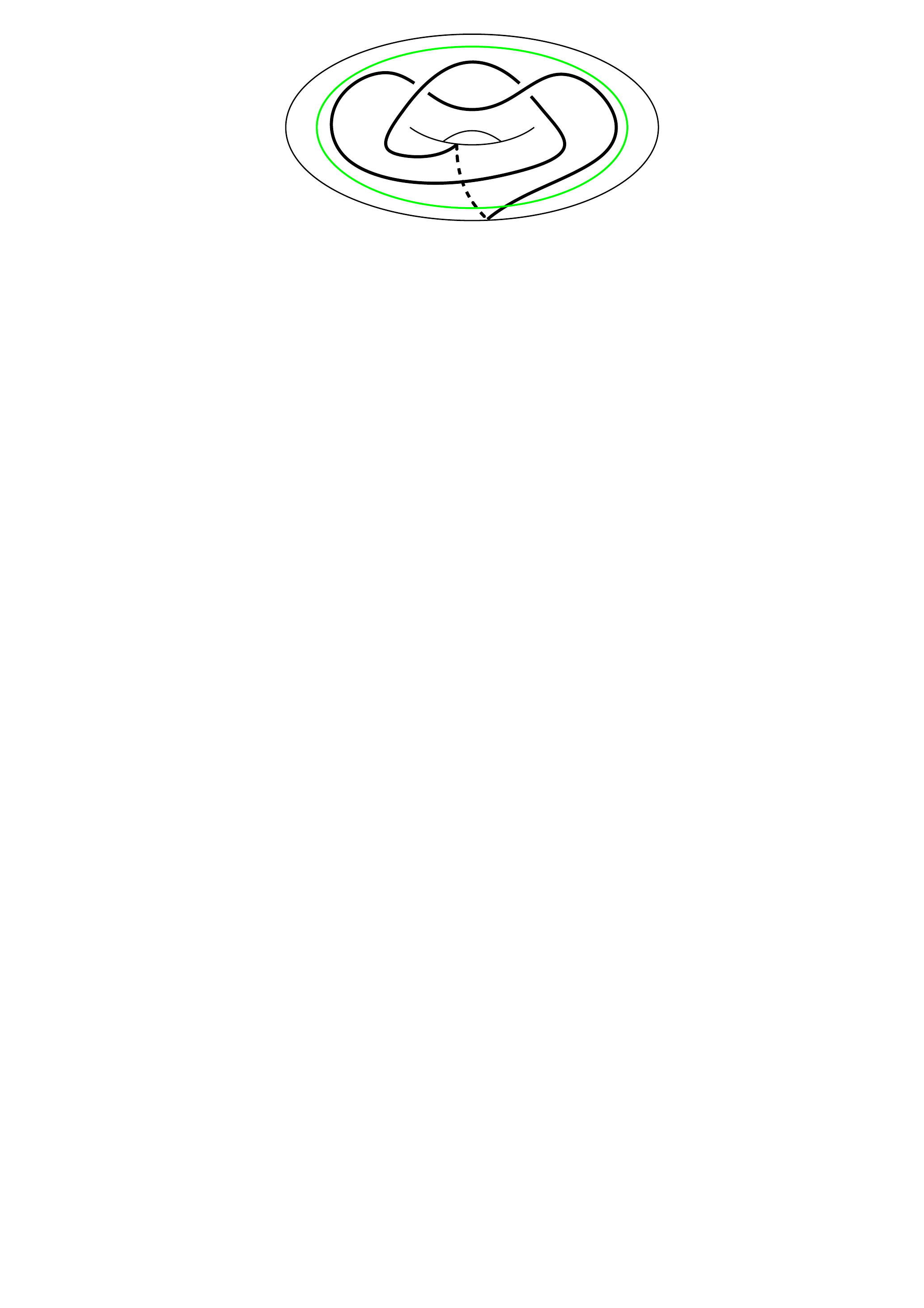}} ~, \raisebox{-5pt}{\includegraphics[scale=0.5]{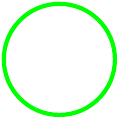}} ~ \right) \quad = \quad	\raisebox{-80pt}{\begin{tikzpicture}[scale=0.8]
		
		\draw[black, ->] (-3.5,-4) -- (3.5,-4) ;
		\draw[black, ->] (-3.5,-4) -- (-3.5,4) ;
		
		\draw (-2,-3.8) -- (-2,-4.2) node[below] {\small $-2$} ;
		\draw (0,-3.8) -- (0,-4.2) node[below] {\small $-1$} ;
		\draw (2,-3.8) -- (2,-4.2) node[below] {\small $0$} ;
		
		\draw (-3.3,-3) -- (-3.7,-3) node[left] {\small $-7$} ;
		\draw (-3.3,-2) -- (-3.7,-2) node[left] {\small $-6$} ;
		\draw (-3.3,-1) -- (-3.7,-1) node[left] {\small $-5$} ;
		\draw (-3.3,0) -- (-3.7,0) node[left] {\small $-4$} ;
		\draw (-3.3,1) -- (-3.7,1) node[left] {\small $-3$} ;
		\draw (-3.3,2) -- (-3.7,2) node[left] {\small $-2$} ;
		\draw (-3.3,3) -- (-3.7,3) node[left] {\small $-1$} ;
		
		\node[] (-2-3)at(-2,-3) {$ -\dfrac{5}{2} $} ;
		\node[] (-2-2)at(-2,-2) {$ -2 $} ;
		\node[] (-2-1')at(-2,-1) {$ -\dfrac{3}{2} \oplus -\dfrac{7}{2} $} ;
		\node[] (-20)at(-2,-0) {$ -3 $} ;
		
		\node[] (0-1)at(0,-1) {$ -\dfrac{3}{2}$} ;
		\node[] (0-1')at(0,1) {$ - \dfrac{5}{2} $} ;
		
		\node[] (21)at(2,1) {$ -\dfrac{1}{2} $} ;
		\node[] (23)at(2,3) {$ -\dfrac{3}{2} $} ;
		
		\end{tikzpicture}}
	\)
	
	\caption{The homology of the complex given in \Cref{Fig:dottedcomp}. The horizontal (vertical) axis denotes the homological (quantum) grading, and the terms on the grid points denote copies of \( \Z \) which generate the homology, along with their \(c\)-grading.}
	\label{Fig:21dottedhom}
\end{figure}

Of course, if the dotting associated to a cohomology class is trivial the resulting homology contains no new information.

\begin{proposition}\label{Prop:trivialdots}
	Let \( D \) be a diagram of an oriented link in a thickened surface \( L \hookrightarrow \Sigma_g \times I \). Suppose that \( c \in H^1 ( \Sigma_g ; \Z_2 ) \) is homologically trivial, or is such that no circle in any resolution in \( \llbracket D, c \rrbracket \) is assigned a dot. Then \( \dkh ( L, c ) = \dkh ( L )  \). That is, the doubled Khovanov homology of \( L \) with respect to \( c \) is equal to its unaugmented homology. This holds also for doubled Lee homology.
\end{proposition}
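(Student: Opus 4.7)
The plan is to reduce both hypotheses of the proposition to the single assertion that no circle in any resolution within $\llbracket D, c \rrbracket$ receives a dot, and then observe that the dotted complex with the grading-preserving differential $d^0$ is literally identical as a complex to the unaugmented doubled Khovanov complex $\cdkh(D)$.

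First I would dispose of the two hypotheses in parallel. In the second case the conclusion is immediate by assumption. In the first case, if $c = 0$ in $H^1(\Sigma_g; \Z_2)$ then the pairing $\langle c, [\gamma] \rangle$ vanishes for every closed curve $\gamma$ embedded in $\Sigma_g$; in particular it vanishes for each circle appearing as a component of a resolution of $D$, so by \Cref{Def:dottedcube} no circle is dotted. Hence in both cases the cube $\llbracket D, c \rrbracket$ is undecorated, and every module assigned to a resolution in $\cdkh(D, c)$ is a tensor product of undotted copies of $\mathcal{A}$ (and its shift), agreeing with the module assignments of $\cdkh(D)$.

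Next I would compare the differentials. From inspection of equations \eqref{Eq:dkkdiff1}--\eqref{Eq:dkkdiff5}, every non-zero $d^{+2}$ contribution involves at least one dotted circle in either the source or the target; in the purely undotted cases $m : \underline{\phantom{X}} \otimes \underline{\phantom{X}} \to \underline{\phantom{X}}$, $\Delta : \underline{\phantom{X}} \to \underline{\phantom{X}} \otimes \underline{\phantom{X}}$, and $\eta : \underline{\phantom{X}} \to \underline{\phantom{X}}$ the text explicitly notes that $d^{+2} = 0$ and $d^0 = d$. Since no dotted circles appear at any vertex of the cube, only these purely undotted components are used, so the total differential $d^0$ on $\cdkh(D, c)$ coincides with the standard differential on $\cdkh(D)$. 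Taking homology gives $\dkh(L, c) = \dkh(L)$.

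Finally I would note that the doubled Lee perturbation terms, indicated in parentheses in \eqref{Eq:dkkdiff1}--\eqref{Eq:dkkdiff5}, likewise involve dotted inputs or outputs, so they vanish identically when no circle is dotted; the same argument therefore carries over verbatim to doubled Lee homology. There is no genuine obstacle in this proof: the content is entirely the bookkeeping observation that the $d^{+2}$ corrections, and the Lee perturbations that refine them, were introduced precisely to handle dotted circles and collapse to nothing in their absence.
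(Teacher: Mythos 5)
Your argument is correct and is exactly the routine verification the paper leaves implicit (the proposition is stated without proof, prefaced only by ``Of course\dots''): a trivial class pairs to zero with the homology class of every circle, an undotted cube involves only the purely undotted map configurations, and for those $d^{+2}=0$, so $d^{0}$ coincides with the standard doubled Khovanov differential. One small wording caution: for the doubled Lee statement the point is not that the perturbation ``vanishes identically'' but that the dotted configurations carrying the parenthesised terms simply never occur, while the surviving undotted maps still carry the standard (non-zero) doubled Lee perturbation, so the complex is literally the standard doubled Lee complex.
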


\subsection{Spectral sequences}\label{Subsec:spectralsequences}
Lee defined a spectral sequence on Khovanov homology whose \( E_\infty \) page is known as Lee homology. Rasmussen used Lee homology to define the \(s\)-invariant of classical knots. There is a similar spectral sequence on doubled Khovanov homology also, and the \( E_\infty \) page is doubled Lee homology \cite{Rushworth2017}. These spectral sequences are defined by adding extra terms to the differential. We can use this technique with respect to the \(c\)-grading defined in \Cref{Subsec:dotteddkk} also.

First, consider the homology with respect to the differential which includes the terms in parentheses in \Cref{Eq:dkkdiff1,Eq:dkkdiff2,Eq:dkkdiff3,Eq:dkkdiff4,Eq:dkkdiff5}. These terms raise the quantum grading by \(4\).

\begin{theorem}\label{Thm:leedotinvariance}
	Denote by \( {\widetilde{d}}^0 \) the differential obtained by adding the terms in parentheses (in \Cref{Eq:dkkdiff1,Eq:dkkdiff2,Eq:dkkdiff3,Eq:dkkdiff4,Eq:dkkdiff5}) to \( d^0 \). Let \( \cdkh ' ( D, c ) \) be the chain complex with chain spaces equal to \( \cdkh ( D, c ) \) but with differential given by \( {\widetilde{d}}^0 \). The homology of \( \cdkh ' ( D, c ) \) with respect to \( {\widetilde{d}}^0 \) is an invariant of the link represented by \( D \), and is denoted \( \dkh ' ( L, c ) \). We refer to \( \dkh ' ( L, c ) \) as the \emph{doubled Lee homology of \(L\) with respect to \(c\)}.
\end{theorem}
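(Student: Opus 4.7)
The plan is to mirror the proof of \Cref{Thm:dothominvariance}, with one preliminary verification: I must show that $\widetilde{d}^0$ is a $c$-degree zero differential on $\cdkh(D,c)$. Once this is in hand, Reidemeister invariance follows from the same Bar-Natan-style cancellation argument already established for doubled Lee homology in \cite{Rushworth2017}, combined with the observation that circles appearing inside the local tangles of a Reidemeister move are nullhomologous in $\Sigma_g$ and therefore carry no dots.

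For the preliminary step, homogeneity is verified by inspecting each parenthetical term in \Cref{Eq:dkkdiff1,Eq:dkkdiff2,Eq:dkkdiff3,Eq:dkkdiff4,Eq:dkkdiff5} and noting that in every case $\#(v_{\dm}) - \#(v_{\dpl})$ drops by exactly $2$ while the quantum grading is raised by $4$; thus the net change in $c = \#(v_{\dm}) - \#(v_{\dpl}) + \tfrac{1}{2} j$ is zero. To see $(\widetilde{d}^0)^2 = 0$, I would invoke the identity $\widetilde{d}^2 = 0$ for the full doubled Lee differential on $\cdkh(D)$ from \cite{Rushworth2017}; this transports verbatim to $\cdkh(D,c)$ because dots merely tag tensor factors with cohomological labels and do not modify the underlying formulae for $m$, $\Delta$, and $\eta$. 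Writing $\widetilde{d} = \widetilde{d}^0 + \widetilde{d}^{+2}$ and splitting $\widetilde{d}^2 = 0$ by $c$-degree into components of degrees $0$, $2$, and $4$ isolates in particular $(\widetilde{d}^0)^2 = 0$, so $\widetilde{d}^0$ is indeed a chain differential.

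For Reidemeister invariance, I would apply the Bar-Natan argument of \Cref{Thm:dothominvariance} directly: in each local tangle complex attached to a Reidemeister move, the circles created bound discs within the move region, are nullhomologous in $\Sigma_g$, and hence receive no dots. Each local complex therefore looks identical to the unaugmented doubled Lee complex of the corresponding classical tangle, modulo passive external dots carried on strands entering from outside the move region. The Bar-Natan chain-homotopy reductions of \cite{Bar-Natan2002,Manturov2013,Rushworth2017} -- which use only surjectivity of $m$ and injectivity of $\Delta$ on these local circles -- then apply without modification. The main obstacle is confirming that the Lee perturbation terms do not disrupt these cancellations; since they act only on the internal tensor labels (not on the diagrammatic placement of dots), and the local circles on which the cancellations occur remain undotted, the perturbations commute with the Bar-Natan chain homotopies in the necessary sense, exactly as in the unperturbed doubled Lee case.
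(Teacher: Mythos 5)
Your proposal is correct and follows exactly the route the paper takes for the analogous \Cref{Thm:dothominvariance} (the paper in fact states \Cref{Thm:leedotinvariance} without proof): circles appearing in the local Reidemeister tangles are nullhomologous, hence undotted, so the Bar-Natan cancellation for the doubled Lee complex of \cite{Rushworth2017} applies verbatim. Your preliminary check that each parenthetical term lowers \( \# ( v_{\dm} ) - \# ( v_{\dpl} ) \) by \(2\) while raising \(j\) by \(4\), so that \( \widetilde{d}^0 \) is \(c\)-homogeneous and squares to zero by splitting \( \widetilde{d}^2 = 0 \) by \(c\)-degree, is a worthwhile addition that the paper leaves implicit.
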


This homology is filtered with respect to the quantum grading, but graded with respect to the \(c\)-grading. Next, we introduce a filtration of the \(c\)-grading also.

\begin{definition}\label{Def:spectral}
	Let \( D \) be a diagram of a link in a thickened surface \( L \hookrightarrow \Sigma_g \times I \), and pick \( c \in H^1 ( \Sigma_g ; \Z_2 ) \). Let \( \cdkh '' ( D , c ) \) be the chain complex with chain spaces equal to \( \cdkh ' ( D , c ) \), and differential obtained from \( \tilde{d}^0 \) by adding \( d^{+2} \) (defined in \Cref{Eq:dkkdiff1,Eq:dkkdiff2,Eq:dkkdiff3,Eq:dkkdiff4,Eq:dkkdiff5}). Denote this differential by \( d'' \), and define \( \dkh '' ( L , c ) \) to be the homology of \( \cdkh '' ( D , c ) \) with respect to it.\CloseDef
\end{definition}

\begin{theorem}
	\label{Thm:doubleprimeinvariance}
	The homology \( \dkh '' ( L , c ) \) is an invariant of \( L \), and is refered to as the \emph{totally reduced homology of \( L \) with respect to \(c\)}.
\end{theorem}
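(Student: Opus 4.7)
The plan is to verify that $\cdkh''(D,c)$ is a chain complex and then to establish its invariance under the (virtual) Reidemeister moves, mirroring the proofs of \Cref{Thm:dothominvariance} and \Cref{Thm:leedotinvariance}.

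First I would check that $(d'')^2 = 0$. The underlying graded $\Z$-module of $\cdkh''(D,c)$ coincides with that of $\cdkh(D)$; the dot decorations merely record the $c$-grading of each generator and do not alter any module map. Inspection of \eqref{Eq:dkkdiff1}--\eqref{Eq:dkkdiff5} shows that every Lee-type term appearing in parentheses preserves the $c$-grading, so $\tilde{d}^0$ collects precisely the $c$-preserving summand of the standard doubled Lee differential, while $d^{+2}$ collects its $c$-raising summand. Consequently $d'' = \tilde{d}^0 + d^{+2}$ recovers the full doubled Lee differential on $\cdkh(D)$, and $(d'')^2 = 0$ follows from the fact that doubled Lee homology is well-defined \cite{Rushworth2017}.

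For invariance under the Reidemeister moves, I would adapt the Bar-Natan-style argument used in the proof of \Cref{Thm:dothominvariance}. The chain-homotopy equivalences witnessing invariance of doubled Lee homology are constructed locally, inside a small disc in $\Sigma_g$ enclosing the crossings involved in the move. Every circle appearing in the cubes of resolutions of the local tangles lies in this disc, hence is null-homotopic in $\Sigma_g$ and evaluates trivially under any $c \in H^1(\Sigma_g;\Z_2)$. The local chain maps thus coincide with their undotted counterparts and are expressible in terms of the maps $m^0, m^{+2}, \Delta^0, \Delta^{+2}, \eta^0, \eta^{+2}$ (and their Lee analogues) applied to undotted generators; in particular they respect the $c$-filtration induced by the splitting $d'' = \tilde{d}^0 + d^{+2}$.

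The main obstacle will be confirming that these chain maps and chain homotopies are genuinely filtered -- not merely filtration-preserving on homology -- with respect to the $c$-filtration, since $d''$ is no longer $c$-graded but only $c$-filtered. As observed above, this reduces to the geometric fact that every circle appearing in a local Bar-Natan complex is null-homotopic and hence undotted, so no nontrivial $c$-interaction arises in the chain-level maps. Once this is in place, one obtains a filtered chain-homotopy equivalence $\cdkh''(D_1,c) \simeq \cdkh''(D_2,c)$ for any pair of Reidemeister-equivalent diagrams $D_1, D_2$, yielding invariance of $\dkh''(L,c)$.
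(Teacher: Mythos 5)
Your proposal is correct, and its first half is exactly the paper's argument: the paper's proof of \Cref{Thm:doubleprimeinvariance} consists entirely of the observation that \( \cdkh''(D,c) \), with differential \( d'' = \tilde{d}^0 + d^{+2} \), is literally the standard doubled Lee complex of the virtual link diagram to which \( D \) projects, so that \( (d'')^2 = 0 \) and invariance both follow immediately from the known invariance of doubled Lee homology \cite{Rushworth2017}. Where you diverge is in the second half: the paper stops at this identification and does not rerun the Bar-Natan-style local argument, whereas you go on to check that the chain homotopy equivalences realising Reidemeister invariance are genuinely \(c\)-filtered, using the fact that all circles appearing in the local tangle complexes are null-homotopic and hence undotted. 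This extra work is not wasted: the paper's one-line reduction only establishes invariance of the underlying (homologically and quantum-graded) group, and if one wants the \(c\)-filtration on \( \dkh''(L,c) \) to be part of the invariant (as the phrasing of \Cref{Cor:doubleprimelee}, ``forgetting the \(c\)-grading'', suggests it should be), then precisely your filtered-chain-homotopy argument is what is needed. So your route is slightly longer but proves a slightly stronger, and arguably the intended, statement; the paper's route is shorter but silent on the filtration.
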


\begin{proof}
	A diagram of a link in a thickened surface projects to a diagram of virtual link. It is easy to see that the chain complex \( \cdkh '' ( D , c ) \) is equal to the standard doubled Lee complex associated to the virtual link diagram to which \( D \) projects to; we have added the components which raise the \(c\)-grading, recovering the full differential. That doubled Lee homology is invariant under the virtual Reidemeister moves shows that \( \dkh '' ( L , c ) \) is invariant also. 
\end{proof}

\begin{corollary}\label{Cor:doubleprimelee}
	Let \( L \hookrightarrow \Sigma_g \times I \) be a link in a thickened surface. Forgetting the \(c\)-grading \( \dkh '' ( L , c ) \cong \dkh ' ( \underline{L} ) \), where \( \underline{L} \) denotes the virtual link represented by \( L \). 
\end{corollary}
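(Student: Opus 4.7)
The plan is to identify $\cdkh''(D,c)$, with the $c$-grading forgotten, with the standard doubled Lee complex $\cdkh'(\underline{D})$ of the virtual link diagram $\underline{D}$ underlying $D$. This identification is essentially the observation already invoked in the proof of \Cref{Thm:doubleprimeinvariance}, so \Cref{Cor:doubleprimelee} will follow as a clean repackaging of that chain-level equality, combined with invariance of doubled Lee homology under the virtual Reidemeister moves.

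First I would match chain modules. By \Cref{Def:dottedcomplex} the dots placed above tensor factors are purely a bookkeeping device recording the cohomology class $c$, and by \Cref{Def:hgrading} they contribute only to the $c$-grading. Erasing them recovers the usual assignment $\mathcal{A}^{\otimes j} \oplus \mathcal{A}^{\otimes j}\lbrace -1 \rbrace$ at a resolution with $j$ circles, which is precisely the chain module at the corresponding vertex of the standard doubled Khovanov complex of $\underline{D}$. This gives a canonical identification of chain modules, compatible with the homological grading.

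Next I would match differentials by checking that $d'' = \tilde{d}^0 + d^{+2}$ reassembles the full doubled Lee differential on $\underline{D}$ once the dots are forgotten. Summing the degree-$0$ and degree-$+2$ components in \Cref{Eq:dkkdiff1,Eq:dkkdiff2,Eq:dkkdiff3,Eq:dkkdiff4,Eq:dkkdiff5}, together with the parenthesised Lee-perturbation terms, reproduces on every edge of the cube exactly the maps $m$, $\Delta$, $\eta$ and their Lee perturbations from the undecorated doubled Lee complex of $\underline{D}$. Hence $\cdkh''(D,c)$ and $\cdkh'(\underline{D})$ coincide as chain complexes modulo the $c$-grading, and passing to homology yields $\dkh''(L,c) \cong \dkh'(\underline{L})$ as claimed.

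There is no substantive obstacle in this step; the real bookkeeping was carried out in \Cref{Thm:doubleprimeinvariance}, where one verifies that the parenthesised and degree-$+2$ components indeed assemble into the standard doubled Lee differential. The only subtlety worth flagging is the well-definedness of $\underline{L}$: the projection from $\Sigma_g \times I$ to a virtual link diagram is canonical up to the virtual Reidemeister moves, so invariance of $\dkh'$ under those moves (already established in the classical-virtual setting) ensures the right-hand side is intrinsic to $L$, independent of the chosen diagram $D$.
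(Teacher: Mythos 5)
Your proposal is correct and follows essentially the same route as the paper: the corollary is an immediate repackaging of the chain-level identification of \( \cdkh''(D,c) \) with the standard doubled Lee complex of the projected virtual diagram, which is exactly the observation made in the proof of \Cref{Thm:doubleprimeinvariance}. Your additional remarks on matching the modules, reassembling the full differential from the graded pieces, and the well-definedness of \( \underline{L} \) are accurate elaborations of what the paper leaves implicit.
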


As a result of \Cref{Cor:doubleprimelee} we see that, ignoring the \(c\)-grading, \( \dkh '' \) behaves identically to the doubled Lee homology of virtual links. An important trait of doubled Lee homology is that its rank is determined by the number of \emph{alternately coloured resolutions} of the argument link; thus it is possible for \( \dkh '' \) to vanish (for more details regarding such resolutions we refer the reader to \cite[Section \(3\)]{Rushworth2017}). When using \( \dkh '' \) to define invariants of surfaces we must take care of this phenomena, as is done in \cite[Section \(3.2\)]{Rushworth2017}.

\begin{figure}
	\( \dkh '' \left( ~ \raisebox{-32pt}{\includegraphics[scale=0.45]{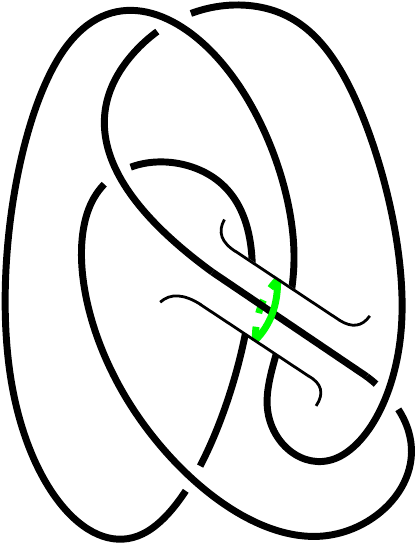}} ~, \raisebox{-5pt}{\includegraphics[scale=0.5]{unknotgreen.pdf}} ~ \right) \quad = \quad	\raisebox{-80pt}{\begin{tikzpicture}[scale=0.8]
		
		\draw[black, ->] (-3,-4) -- (2,-4) ;
		\draw[black, ->] (-3,-4) -- (-3,1) ;
		
		\draw (-2,-3.8) -- (-2,-4.2) node[below] {\small $-1$} ;
		\draw (-1,-3.8) -- (-1,-4.2) node[below] {\small $-\frac{1}{2}$} ;
		\draw (0,-3.8) -- (0,-4.2) node[below] {\small $0$} ;
		\draw (1,-3.8) -- (1,-4.2) node[below] {\small $\frac{1}{2}$} ;
		
		\draw (-2.8,-3) -- (-3.2,-3) node[left] {\small $-2$} ;
		\draw (-2.8,-2) -- (-3.2,-2) node[left] {\small $-1$} ;
		\draw (-2.8,-1) -- (-3.2,-1) node[left] {\small $0$} ;
		\draw (-2.8,0) -- (-3.2,0) node[left] {\small $1$} ;
		
		
		\node[] ()at(0,-3) {$ \Q $} ;
		
		\node[] ()at(1,-2) {$ \Q $} ;
		
		\node[] ()at(-2,-1) {$ \Q $} ;
		
		\node[] ()at(-1,0) {$ \Q $} ;
		
		\end{tikzpicture}}
	\)	
	\caption{The totally reduced homology of the knot depicted with respect to the generator depicted in green; the knot is a lift of the virtual knot \(4.12\). All of the generators are at homological grading \(0\), and the horizontal (vertical) axis denotes the \(c\)-grading (quantum grading).}
	\label{Fig:412dottedprimehom}
\end{figure}

Of course, we could have added the terms in \Cref{Eq:dkkdiff1,Eq:dkkdiff2,Eq:dkkdiff3,Eq:dkkdiff4,Eq:dkkdiff5} which raise the \(c\)-grading first, then added the terms in parentheses, to arrive at the totally reduced homology. In other words, there exists a commutative square of spectral sequences:
\begin{center}
	\begin{tikzpicture}
		\node[] (0)at(0,0) { \( \dkh ( L , c ) \) } ;
		\node[] (1)at(2,-1) { \( \dkh' ( L , c ) \) } ;
		\node[] (2)at(-2,-1) { \( \mathcal{H} ( L , c ) \) } ;
		\node[] (3)at(0,-2) { \( \dkh'' ( L , c ) \) } ;
		
		\draw[->] (0) -- (1) ;
		\draw[->] (0) -- (2) ;
		\draw[->] (1) -- (3) ;
		\draw[->] (2) -- (3) ;
	\end{tikzpicture}
\end{center}
where \( \mathcal{H} ( L , c ) \) is the homology obtained from \( \dkh ( L , c ) \) by adding the terms labelled \( d^{+2} \) in \Cref{Eq:dkkdiff1,Eq:dkkdiff2,Eq:dkkdiff3,Eq:dkkdiff4,Eq:dkkdiff5}. We conclude by remarking that the groups \( \mathcal{H} ( L , c ) \) and \( \dkh' ( L , c ) \) are mysterious; understanding their structure is an interesting direction of further research.

\subsection{Interaction with cobordisms}\label{Sec:cobordismmaps}
In this section we describe the process of associating maps between homology groups to cobordisms between links in thickened surfaces.

\subsubsection{Elementary cobordisms}\label{Subsec:elementary}
As in the classical case, cobordisms between knots in thickened surfaces can be analysed by decomposing them into simple pieces, which have \(0\) or \(1\) Morse critical points.

\begin{definition}\label{Def:elementary}
	Let \( L \hookrightarrow \Sigma_g \times I \) be a link in a thickened surface. Consider the following descriptions of a cobordism, \( S \), which begins at \( L \) and ends at another (possibly distinct) link in \( \Sigma_g \times I \):
	\begin{enumerate}[(i)]
		\item \( S \) is described by a single Reidemeister move on \( L \)
		\item \( S \) is described by the birth of a contractible loop in \( \Sigma_g \). This is known as a \emph{\(0\)-handle addition}.
		\item \( S \) is described by the death of a contractible loop in \( \Sigma_g \). This is known as a \emph{\(2\)-handle addition}.
		\item \( S \) is described by an oriented saddle on \( L \). This is known as a \emph{\(1\)-handle addition}.
		\item\label{Item:elementarystab} \( S \) is described by a single stabilisation on \( \Sigma_g \).
	\end{enumerate}
	The cobordisms described above are known as \emph{elementary cobordisms}.\CloseDef
\end{definition}

It is clear that any cobordism can be written as a concatenation of elementary cobordisms. We shall be focusing on cobordisms embedded in \(4\)-manifolds of the form \( \Sigma_g \times I \times I \), i.e.\ those that can be described without elementary cobordisms of the form given in \( ( \text{v} ) \) above.

\subsubsection{Maps on \( \dkh ( L, c) \), \( \dkh ' ( L, c) \), and \( \dkh '' ( L, c) \) }\label{Subsec:dkhprimemaps}
Given \( L \hookrightarrow \Sigma_g \times I \), \( c \in H^1 ( \Sigma_g ; \Z_2 ) \), the three theories \( \dkh ( L, c) \), \( \dkh ' ( L, c) \), and \( \dkh '' ( L, c) \) are nested i.e.\ there is the relationship:
\begin{center}
	\begin{tikzpicture}[roundnode/.style={}]
	\node[roundnode] (s0)at (-5.75,0)  {\( \dkh ( L,c ) \)
	};
	
	\node[roundnode] (s1)at (0,0)  {\( \dkh ' ( L,c ) \)
	};
	
	\node[roundnode] (s2)at (5.75,0)  {\( \dkh '' ( L,c ) \)
	};
	
	\draw[->] (s0)--(s1) node[above,pos=0.5]{filtration in \(j\)-grading} node[below,pos=0.5]{add Lee components} ;
	
	\draw[->] (s1)--(s2) node[above,pos=0.5]{filtration in \(c\)-grading} node[below,pos=0.5]{add \(d^{+2}\) components};
	
	\end{tikzpicture}
\end{center}
In light of this, we will suffice ourselves by describing the process of assigning maps on \( \dkh \) to cobordisms, as the process is identical for \( \dkh ' \) and \( \dkh '' \) modulo taking the appropriate filtration and adding the appropriate components to the maps assigned to \( 1 \)-handles (the maps assigned to \(0\)- and \(2\)-handles remain the same).

Let \( L, L' \hookrightarrow \Sigma_g \times I \) and \( S \hookrightarrow \Sigma_g \times I \times I \) an elementary cobordism such that \( \partial S = L \sqcup L' \) (this precludes \(S\) being of type \( (\text{v}) \) in \Cref{Def:cobordism}). Given \( c \in H^1 ( \Sigma_g ; \Z_2 ) \) we assign a map \( \phi_S : \dkh ( L, c ) \rightarrow \dkh ( L', c ) \) to \( S \) depending on its type. This assignment is made in essentially identical fashion to that described in \cite[Section \(3.2\)]{Rushworth2017}; for completeness, we shall give the form of the maps assigned to \(0\)-, \(1\)-, and \(2\)-handle additions.

Let \( D \) and \( D' \) be diagrams of \( L \) and \( L' \), and \( S \) a handle addition. Then \( S \) defines a map of cubes between the cube of resolutions of \( D \) and that of \( D' \): removing a neighbourhood of the crossings of \( D \) and \( D' \), both diagrams look identical except in the region in which the handle is attached. Moreover, as handle additions do not change the number of crossings of diagram, the resolutions of \( D \) and \( D' \) are in bijection (a string of \(0\)'s and \(1\)'s defines uniquely a resolution of \( D \) and of \( D' \)). Let the map of cubes defined by \( S \) be the map which sends a resolution of \( D \) to the associated resolution of \( D' \). As the diagrams are identical exept in a small region this map acts simply on resolutions, and depends on the handle addition contained in \( S \), and are defined as followed.

\begin{definition}
	\noindent (\emph{\(0\)-handles}): If \( S \) is a \(0\)-handle then \( \phi_S = \iota \), where \( \iota : \mathbb{Q} \rightarrow \mathcal{A} \), \( \iota ( 1 ) = \vulp \), so that \( \iota ( 1 ) \otimes \vup = v^{\text{u}}_{++} \), for example. The newly created circle cannot possess a dot, as it is contractible. Thus \( \iota  \) is \(c\)-filtered of degree \( +\frac{1}{2} \).
	
	\noindent  (\emph{\(1\)-handles}): If \( S \) is a \(1\)-handle then \( \phi_S \) acts as either \( m^0 \), \( \Delta^0 \), or \( \eta^0 \) (as defined in \Cref{Eq:dkkdiff1,Eq:dkkdiff2,Eq:dkkdiff3,Eq:dkkdiff4,Eq:dkkdiff5}) - which map is determed by the effect of \( S \) on individual resolutions. It is clear that \( \phi_S \) is \(c\)-filtered of degree \( -\frac{1}{2} \).
	
	\noindent (\emph{\(2\)-handles}): If \( S \) is a \(2\)-handle then \( \phi_S = \epsilon \), where \( \epsilon : \mathcal{A} \rightarrow \mathbb{Q} \), \( \epsilon ( \vulp ) = 0 \), \( \epsilon ( \vulm ) = 1 \). As the circle being killed is contractible, \( \epsilon \) is also \(c\)-filtered of degree \( + \frac{1}{2} \).\CloseDef
\end{definition}

As mentioned above, we can repeat this method to assign maps \( \phi^{\prime}_S : \dkh' ( L, c ) \rightarrow \dkh' ( L', c ) \) and \( \phi^{\prime \prime}_S : \dkh'' ( L, c ) \rightarrow \dkh'' ( L', c ) \) by taking filtrations and adding terms to the differential. They are defined as follows.

\begin{definition}
	\noindent (\emph{\(0\)-handles}):  \( \phi_S \), \( \phi^{\prime}_S \), and \( \phi^{\prime \prime}_S \) are all of the same form, as \( \iota \) is unchanged.
	
	\noindent  (\emph{\(1\)-handles}): \( \phi^{\prime}_S \) is obtained from \( \phi_S \) by adding the terms in parentheses in \Cref{Eq:dkkdiff1,Eq:dkkdiff2,Eq:dkkdiff3,Eq:dkkdiff4,Eq:dkkdiff5}, and \( \phi^{\prime \prime}_S \) from \( \phi^{\prime}_S \) by adding the \( d^{+2} \) terms.
	
	\noindent (\emph{\(2\)-handles}): \( \phi_S \), \( \phi^{\prime}_S \), and \( \phi^{\prime \prime}_S \) are all of the same form, as \( \epsilon \) is unchanged.\CloseDef
\end{definition}

The maps \( \phi^{\prime}_S \) and \( \phi^{\prime \prime}_S \) are of the same \(c\)-degree as \( \phi_S \). In the case of \(0\)- and \(2\)-handles this is obvious. In the case of \(1\)-handles, one can see this by noting that, although the components of \( d^{+2} \) raise the \(c\)-grading by \( \frac{3}{2} \) (as cobordism maps), we have taken an (upward) filtration, so the filtration degree of \( \phi_S \), \( \phi^{\prime}_S \), and \( \phi^{\prime \prime}_S \) depends only on terms whose \(c\)-grading is lowered.

As in the case of cobordism maps on classical or doubled Khovanov homology, \( \phi_S \) is homologically graded of degree \(0\), and quantum filtered of degree \( 0 \), \(-1\), or \(+1\), depending on its type.

In summary, we have the following maps assigned to elementary cobordisms.

\begin{definition}
	\label{Def:cobordismmaps}
	Assigned to an elementary cobordism \( S \), we have the three maps \( \phi_S : \dkh ( L, c ) \rightarrow \dkh ( L', c ) \), \( \phi^{\prime}_S : \dkh' ( L, c ) \rightarrow \dkh' ( L', c ) \) and \( \phi^{\prime \prime}_S : \dkh'' ( L, c ) \rightarrow \dkh'' ( L', c ) \): they are all trigraded of degree \( ( 0, x, \frac{1}{2} x) \), \( x \in \lbrace 0, \pm 1 \rbrace \), where \( (i,j,c) \) denotes the trigrading given by the homological, quantum, and \(c\)-gradings (in that order).\CloseDef
\end{definition}

Using these maps we can define the map assigned to a generic cobordism.

\begin{definition}
	Let \( S \hookrightarrow \Sigma_g \times I \times I \) be a cobordism between links \( L, L' \hookrightarrow \Sigma_g \times I \), such that
	\begin{equation*}
	S = S_1 \cup S_2 \cup \cdots \cup S_n
	\end{equation*}
	where \( S_i \) is an elementary cobordism. Define \( \phi_S : \dkh ( L, c) \rightarrow \dkh ( L' , c ) \) as
	\begin{equation*}
	\phi_S = \phi_{S_n} \circ \phi_{S_{n-1}} \circ \cdots \circ \phi_{S_1}
	\end{equation*}
	and likewise \( \phi^{\prime}_S : \dkh' ( L, c ) \rightarrow \dkh' ( L', c ) \) and \( \phi^{\prime \prime}_S : \dkh'' ( L, c ) \rightarrow \dkh'' ( L', c ) \).\CloseDef
\end{definition}

\begin{proposition}\label{Prop:totallyreducedconcgrading}
	Let \( S \hookrightarrow \Sigma \times I \times I \) be a concordance between a knot \( K \hookrightarrow \Sigma_g \times I \) and a link \( L \hookrightarrow \Sigma_g \times I \). Then \( \phi^{\prime \prime}_S : \dkh'' ( K, c ) \rightarrow \dkh'' ( L, c ) \) is \(c\)-filtered of degree \(0\), and has trivial kernel. If \(S\) is between two knots then \( \phi_S \) is an isomorphism.
\end{proposition}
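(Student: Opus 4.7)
The plan is to decompose $S$ into elementary cobordisms and apply the maps defined in \Cref{Def:cobordismmaps} piece by piece, then to transfer the injectivity statement to the virtual setting via \Cref{Cor:doubleprimelee}.

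First, I write $S = S_1 \cup \cdots \cup S_n$ as a concatenation of elementary cobordisms via \Cref{Def:elementary}; since $S$ lies in $\Sigma_g \times I \times I$, no stabilisations of type \((\text{v})\) appear. By \Cref{Def:cobordismmaps} each $\phi^{\prime\prime}_{S_i}$ is $c$-filtered of degree $+\tfrac{1}{2}$ when $S_i$ is a $0$- or $2$-handle addition, of degree $-\tfrac{1}{2}$ when $S_i$ is a $1$-handle addition, and of degree $0$ when $S_i$ is a Reidemeister move. Writing $n_h$ for the number of $h$-handles, $\phi^{\prime\prime}_S$ is therefore $c$-filtered of degree
\begin{equation*}
\tfrac{1}{2}(n_0 + n_2) - \tfrac{1}{2} n_1 \;=\; \tfrac{1}{2} \chi(S),
\end{equation*}
which vanishes exactly when $\chi(S) = 0$; this holds for a genus-zero knot-to-knot cobordism, and more generally I interpret ``concordance'' in the statement as a cobordism for which this Euler-characteristic condition is met.

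For triviality of the kernel I would forget the $c$-grading and exploit \Cref{Cor:doubleprimelee}: under the resulting isomorphism $\dkh^{\prime\prime}(-,c) \cong \dkh^{\prime}(\underline{-})$ the map $\phi^{\prime\prime}_S$ is identified with the ordinary doubled Lee cobordism map $\phi^{\prime}_{\underline{S}}$ associated to the projection of $S$ to a virtual cobordism. The claim then reduces to injectivity of cobordism maps on doubled Lee homology of virtual links, a statement proved in \cite{Rushworth2017} by tracking the canonical generators $\sg_o$, $\sg_{\bar{o}}$ of the knot end $\underline{K}$ through each elementary cobordism and verifying that they have non-zero image in $\dkh^{\prime}(\underline{L})$. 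For the knot-to-knot isomorphism the same identification shows that $\dkh^{\prime\prime}(K,c)$ and $\dkh^{\prime\prime}(K',c)$ have equal finite $\Q$-dimension, so an injection between them is automatically an isomorphism.

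The main obstacle is the potential vanishing of $\dkh^{\prime\prime}$: as noted after \Cref{Cor:doubleprimelee}, the totally reduced homology of a virtual link can be zero when no alternately coloured resolution exists, in which case injectivity would be vacuous. The substance of the argument therefore lies in verifying that the existence of a concordance from a knot forces the link end to admit an alternately coloured resolution, so that the canonical generators genuinely survive under $\phi^{\prime\prime}_S$; this amounts to a handle-by-handle compatibility check in the spirit of \cite[Section $3$]{Rushworth2017}, now carried out while simultaneously tracking the $c$-filtration introduced in \Cref{Subsec:dotteddkk}.
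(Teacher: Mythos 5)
Your proposal is correct and follows essentially the same route as the paper: the degree-$0$ claim comes from the same handle count (the paper phrases it as the number of $0$- and $2$-handles equalling the number of $1$-handles), injectivity is deduced from the non-vanishing/injectivity of the doubled Lee cobordism map of \cite[Theorem 3.19]{Rushworth2017} via \Cref{Cor:doubleprimelee}, and the knot-to-knot isomorphism follows from injectivity plus the rank-$4$ dimension count. Your explicit remark that the degree-$0$ statement really requires $\chi(S)=0$ (automatic for knot-to-knot concordances but needing interpretation when $L$ has several components) is a slightly more careful reading than the paper's own.
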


\begin{proof}
	It is shown in \cite[Theorem \(3.19\)]{Rushworth2017} that the map on doubled Khovanov homology assigned to \( S \) is non-zero. Combining this with \Cref{Cor:doubleprimelee}, in particular the fact that the terms of the differential of \( \dkh \) are equal to those of \( \dkh '' \), we see that \( \phi^{\prime \prime}_S \neq 0 \). It is not stated explicitly in the proof of \cite[Theorem \(3.19\)]{Rushworth2017}, but this implies that \( \phi_S \) has trivial kernel: carefully applying the proof applied to cobordisms which begin with a knot (as \(S\) does here) while keeping the relationship between the elements of the canonical basis in mind makes this clear. In the case in which \( S \) is between two knots, we see that \( \phi_S \) is an injective linear map with domain and codomain a vector space of rank \(4\); by the Rank-Nullity Theorem it is surjective.
	
	To see that \( \phi^{\prime \prime}_S \) is \(c\)-filtered of degree \(0\), we recall that in any decomposition of \(S\) into elementary cobordisms the number of \(0\)- and \(2\)-handles must equal the number of \(1\)-handles. To conclude, we notice that the degree of the map assigned to a \(0\)- or a \(2\)-handle cancels exactly with that of the map assigned to \(1\)-handles (analogously to the quantum degree situation).
\end{proof}

\subsection{Obstructions to the existence of embedded discs from \( \dkh '' \)}\label{Subsec:dotteddoubledinvariants}
Let \( K \hookrightarrow \Sigma_g \times I \) be a knot in a thickened surface. All three gradings of \( \dkh '' (K,c) \) obstruct the existence of a disc bounding \(K\) in \( \Sigma_g \times I \times I \); that the homological and quantum gradings do follows from the properties of the doubled Rasmussen invariant \cite[Section \(4\)]{Rushworth2017}. In this section we show that the \(c\)-grading does also.

\begin{theorem}
	Let \( K \hookrightarrow \Sigma_g \times I \) be a knot in a thickened surface. Pick \( c \in H^1 ( \Sigma_g ; \Z_2 ) \) and compute \( \dkh '' ( K, c) \). If \( \dkh '' ( K, c) \) is not equal to that of an unknot then \( K \) does not bound a disc in \( \Sigma_g \times I \times I \).
\end{theorem}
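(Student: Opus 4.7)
The plan is to argue the contrapositive: if $K$ bounds a smoothly embedded disc in $\Sigma_g \times I \times I$, then $\dkh''(K, c)$ coincides with $\dkh''(U, c)$ for $U$ an unknot in $\Sigma_g \times I$. This runs parallel to the proof given for the other two gradings in \cite[Section 4]{Rushworth2017}, augmented by careful bookkeeping of the $c$-grading via \Cref{Prop:totallyreducedconcgrading}.

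First I would convert the bounding disc into a concordance. Suppose $K \hookrightarrow \Sigma_g \times I \times \{0\}$ bounds a disc $D \hookrightarrow \Sigma_g \times I \times I$. Puncturing $D$ near an interior point and then pushing the resulting inner boundary component up to $\Sigma_g \times I \times \{1\}$ through a vertical annulus produces a concordance $S$ between $K$ and a small unknot $U \hookrightarrow \Sigma_g \times I \times \{1\}$. Since $U$ bounds a disc in $\Sigma_g$ itself, no circle in any resolution of $U$ receives a dot, so \Cref{Prop:trivialdots} identifies $\dkh''(U, c)$ with the ordinary totally reduced homology of the unknot, its $c$-grading given simply by $\tfrac{1}{2} j$.

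Next I would apply \Cref{Prop:totallyreducedconcgrading} to both $S$ and its time-reverse $\bar{S} : U \to K$. Each is a concordance between two knots, so both $\phi''_S : \dkh''(K,c) \to \dkh''(U,c)$ and $\phi''_{\bar{S}} : \dkh''(U,c) \to \dkh''(K,c)$ are isomorphisms that are $c$-filtered of degree $0$. Denoting the $c$-filtration by $\{F_p\}$, the filtered-degree-zero property combined with bijectivity forces
\begin{equation*}
\dim F_p \bigl( \dkh''(K,c) \bigr) \leq \dim F_p \bigl( \dkh''(U,c) \bigr)
\end{equation*}
from $\phi''_S$, and the reverse inequality from $\phi''_{\bar{S}}$. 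Hence the filtrations agree rank-by-rank, so the associated $c$-graded objects---and therefore $\dkh''(K,c)$ and $\dkh''(U,c)$ as $c$-graded invariants---coincide. Coupled with the analogous equalities in the $(i,j)$-bigrading established in \cite{Rushworth2017}, this yields $\dkh''(K,c) = \dkh''(U,c)$.

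The most delicate point is ensuring that two filtered degree-$0$ isomorphisms in opposite directions genuinely pin down the $c$-graded structure rather than merely equal total ranks; the dimension count displayed above is precisely what is required, but it must be invoked at every filtration level simultaneously. A subsidiary technical check is that \Cref{Prop:totallyreducedconcgrading} applies cleanly to the reversed concordance $\bar{S}$: this reduces to observing that reversing a decomposition of $S$ into elementary cobordisms interchanges $0$- and $2$-handles and time-reverses each $1$-handle, operations under which the filtered $c$-degree of the associated map is preserved by \Cref{Def:cobordismmaps}.
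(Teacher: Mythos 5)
Your argument is correct in substance, but it takes a genuinely different route from the paper. The paper does not argue the contrapositive via a two-sided comparison. Instead it (a) disposes of the case where the homological or quantum gradings are non-trivial by citing the doubled Rasmussen invariant, (b) proves a one-directional obstruction criterion --- if there is a quantum degree \( \mathfrak{j} \) above which every class satisfies \( c(x) < j(x) - \tfrac{1}{2} \), then the injective, \(j\)- and \(c\)-filtered degree-\(0\) map \( \phi''_S : \dkh''(U,c) \to \dkh''(K,c) \) induced by cutting the disc open would have to send some generator of the unknot's homology ``up and to the right'' into an empty region, a contradiction --- and then (c) classifies the possible totally reduced homologies of a knot with trivial \( (i,j) \)-gradings, showing the only non-unknot possibility satisfies that criterion. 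Your two-sided argument, applying \Cref{Prop:totallyreducedconcgrading} to both \( S \) and \( \bar{S} \) and counting dimensions of filtration levels, cleanly bypasses step (c), which the paper only sketches by analogy with \cite[Lemma 4.2]{Rushworth2017}; that is a real simplification. What the paper's route buys in exchange is the intermediate \( \mathfrak{j} \)-criterion, a reusable obstruction that does not require knowing the homology of the unknot side in advance.

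One point in your writeup needs tightening. Knowing that \( \dim F_p \) agrees for the \( c \)-filtration at every \( p \), and separately that the \( (i,j) \)-data agree, does not by itself determine the joint trigraded object: two bifiltered spaces can have matching dimensions for each filtration individually yet differ on \( \dim ( F_p \cap G_q ) \). The fix is immediate with what you already have --- \( \phi''_S \) and \( \phi''_{\bar{S}} \) preserve the homological grading and are filtered of degree \(0\) with respect to \emph{both} the quantum and \( c \)-filtrations simultaneously, hence preserve every intersection \( F_p \cap G_q \) within each homological degree, and the injectivity/dimension count should be run on those intersections --- but as written the final sentence of your third paragraph elides this.
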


\begin{proof}
	As mentioned above, a knot \(K\) for which \( \dkh '' ( K, c) \) has non-trivial homological or quantum degree is not slice as a virtual knot, so that in particular it cannot bound a disc in \( \Sigma_g \times I \times I \). As such, we shall focus on the case in which \( \dkh '' ( K, c) \) has trivial homological and quantum gradings, but non-trivial \(c\)-grading.
	
	First we shall show that if there exists a quantum degree, \( \mathfrak{j} \), such that for all \( x \in \dkh '' ( K , c ) \) with \( \mathfrak{j} \leq j ( x ) \) then \( c ( x ) < j( x ) - \frac{1}{2} \), then \( K \) does not bound a disc in \( \Sigma_g \times I \times I \).
	
	Let such a \( \mathfrak{j} \) exist and assume towards a contradiction that there exists a disc embedded in \( \Sigma_g \times I \times I \) which bounds \( K \). Then there exists a concordance, \( S \), from a contractible loop in \( \Sigma_g \) to \(K\), formed by cutting the disc open. Denote this loop by \( U \), so that \( \phi_S : \dkh '' ( U , c ) \rightarrow \dkh '' ( K , c ) \). The totally reduced homology of \( U \) is denoted by the black generators below:
	\begin{center}
		\begin{tikzpicture}[scale=0.8]
		
		\draw[black, ->] (-3,-4) -- (2,-4) ;
		\draw[black, ->] (-3,-4) -- (-3,1) ;
		
		\draw (-2,-3.8) -- (-2,-4.2) node[below] {\small $-1$} ;
		\draw (-1,-3.8) -- (-1,-4.2) node[below] {\small $-\frac{1}{2}$} ;
		\draw (0,-3.8) -- (0,-4.2) node[below] {\small $0$} ;
		\draw (1,-3.8) -- (1,-4.2) node[below] {\small $\frac{1}{2}$} ;
		
		\draw (-2.8,-3) -- (-3.2,-3) node[left] {\small $-2$} ;
		\draw (-2.8,-2) -- (-3.2,-2) node[left] {\small $-1$} ;
		\draw (-2.8,-1) -- (-3.2,-1) node[left] {\small $0$} ;
		\draw (-2.8,0) -- (-3.2,0) node[left] {\small $1$} ;
		
		
		\node[] ()at(0,-3) {\color{red} $ \Q $} ;
		
		\node[] ()at(1,-2) {\color{red} $ \Q $} ;
		
		\node[] ()at(-2,-1) {\color{red} $ \Q $} ;
		
		\node[] ()at(-1,0) {\color{red} $ \Q $} ;
		
		
		\node[] ()at(-2,-3) {$ \Q $} ;
		
		\node[] ()at(-1,-2) {$ \Q $} ;
		
		\node[] ()at(0,-1) {$ \Q $} ;
		
		\node[] ()at(1,0) {$ \Q $} ;
		
		\end{tikzpicture}
	\end{center}
	All of the generators are at homological degree \( 0 \), and the horizontal (vertical) axis denotes the \(c\)-grading (quantum grading). By \Cref{Prop:totallyreducedconcgrading} \( \phi_S \) is \(j\)- and \(c\)-filtered of degree \( 0 \). Thus generators of \( \dkh '' ( U , c ) \) cannot decrease in either \(j\)- or \(c\)-grading, and one may think of them as being permitted to move only up and to right under the action of \( \phi_S \) (when depicted on grids such as the one above). However, as \( \mathfrak{j} \) exists, one sees that there must be generators of \( \dkh '' (U,c) \) such that there are no available generators of \( \dkh '' ( K , c) \) above and to the right of them. Also by \Cref{Prop:totallyreducedconcgrading} we have that \( \phi_S \) is an isomorphism so that such generators cannot be sent to zero, yielding the desired contradiction.
	
	To conclude we claim that if \( K \) is such that \( \dkh '' ( K , c ) \) is not that of the unknot, then it must be equal to the homology depicted by the red generators on the grid above. This can be shown using essentially identical arguments to those used to determine analogous properties of the \( j \)-grading; see \cite[Lemma \(4.2\)]{Rushworth2017}, for example. Clearly \( \mathfrak{j} = 0 \) for the homology depicted by the red generators, so that if \( K \) does not have the totally reduced homology of an unknot then it does not bound a disc in \( \Sigma_g \times I \times I \).
\end{proof}

\section{Gradings from homotopy classes and picture-valued invariants}\label{Sec:homotopygrading}

In this section we produce picture-valued invariants of links in thickened surfaces, as well as picture-valued invariants of embedded surfaces.

\subsection{Gradings from homotopy classes}\label{Subsec:homotopygradings}
We now describe a similiar process to that of \Cref{Sec:gradingdef}, now using homotopic information (rather than cohomological) to produce an additional grading. These new gradings take values in an Abelian group formed from certain diagrams, and are therefore referred to as \emph{picture-valued}. For full details we refer the reader to \cite{Manturovb}.

\begin{definition}
	\label{Def:homotopygrading}
Let \( D \) be a diagram of a link in a thickened surface \( L \hookrightarrow \Sigma_g \times I \). Denote by \( \llbracket D \rrbracket \) the standard cube of resolutions associated to \( D \). Consider the free Abelian group generated by homotopy classes of loops in \( \Sigma_g \). Denote by \( \gH \) the quotient of this group formed by equating contractible loops with the identity.

Let \( \vckh_h ( D ) \) denote the chain complex with the same chain spaces as \( \vckh ( D ) \) but an altered differential \footnote{recall that in the case of virtual Khovanov homology we have \( \eta = 0 \)}. In order to describe this new differential we define an \( \gH \)-grading on \( \vckh_h ( D ) \).

Let \( x_1 \otimes x_2 \otimes \cdots \otimes x_n = x \in \mathcal{A}^{\otimes n} \) be a state associated to the resolution \( \bigsqcup_{i=1}^n C_i \), where \( C_i \) is a circle. Define 
\begin{equation}\label{Eq:homotopygrading}
	h( x ) \coloneqq \sum_{i=1}^{n} p (x_i) [C_i] \in \gH
\end{equation}
where \( [ C_i ] \in \gH \) denotes the class represented by the \(i\)-th circle and \( p (v_\pm) = \pm 1 \). The grading \( h : \vckh_h ( D ) \rightarrow \gH \) is refered to as the \emph{homotopic grading}.

The differential of \( \vckh_h ( D ) \) is denoted \( d_h \), with components \( m_h \) and \( \Delta_h \) (the single-cycle smoothing is associated the zero map); these are the components of the maps used in virtual Khovanov homology which preserve the \( h \)-grading, and they depend on the homotopy classes represented by the circles they map between. 

Let \( m_h : \mathcal{A} \otimes \mathcal{A} \rightarrow \mathcal{A} \) correspond to the merging of circles \( C_1 \) and \( C_2 \) to produce \( C_3 \). Then
\begin{equation}\label{Eq:mh1}
m_h = \left\{\begin{array}{@{}l@{\quad}l@{}}
m, & [C_1]=[C_2]=[C_3]=\bigcirc;\\[3pt]
m^0_h, & [C_1],[C_2]\ne\bigcirc,\ [C_3]= \bigcirc;\\[3pt]
m^1_h, & [C_2]=\bigcirc,\ [C_1]=[C_3]\ne \bigcirc;\\[3pt]
m^2_h, & [C_1]=\bigcirc,\ [C_2]=[C_3]\ne\bigcirc;\\[3pt]
0, & [C_1],[C_2],[C_3]\ne\bigcirc.
\end{array}\right.
\end{equation}
The maps $m^0_h, m^1_h, m^2_h$ are defined
\begin{equation}\label{Eq:mh2}
\begin{array}{@{}l@{\quad}l@{\quad}l@{}}
m^0_h(v_+\otimes v_+)=0,   & m^1_h(v_+\otimes v_+)=v_+, & m^2_h(v_+\otimes v_+)=v_+,\\[3pt]
m^0_h(v_+\otimes v_-)=v_-, & m^1_h(v_+\otimes v_-)=0, & m^2_h(v_+\otimes v_-)=v_-,\\[3pt]
m^0_h(v_-\otimes v_+)=v_-, & m^1_h(v_-\otimes v_+)=v_-, & m^2_h(v_-\otimes v_+)=0,\\[3pt]
m^0_h(v_-\otimes v_-)=0,   & m^1_h(v_-\otimes v_-)=0, & m^2_h(v_-\otimes v_-)=0.
\end{array}
\end{equation}

Similarly, let \( \Delta_h : \mathcal{A} \rightarrow \mathcal{A} \otimes \mathcal{A} \) correspond to the splitting of the circle \( C_3 \) into the circles \( C_1 \) and \( C_2 \). Then
\begin{equation}\label{Eq:Dh1}
\Delta_h = \left\{\begin{array}{@{}l@{\quad}l@{}}
\Delta, & [C_1]=[C_2]=[C_3]=\bigcirc;\\[3pt]
\Delta^0_h, & [C_1],[C_2]\ne\bigcirc,\ [C_3]= \bigcirc;\\[3pt]
\Delta^1_h, & [C_2]=\bigcirc,\ [C_1]=[C_3]\ne \bigcirc;\\[3pt]
\Delta^2_h, & [C_1]=\bigcirc,\ [C_2]=[C_3]\ne\bigcirc;\\[3pt]
0, & [C_1],[C_2],[C_3]\ne\bigcirc.
\end{array}\right.
\end{equation}
The maps $\Delta^0_h, \Delta^1_h, \Delta^2_h$ are defined
\begin{equation}\label{Eq:Dh2}
\begin{array}{@{}l@{\quad}l@{\quad}l}
\Delta^0_h(v_+)=v_+\otimes v_- + v_-\otimes v_+,   & \Delta^1_h(v_+)=v_+\otimes v_-, & \Delta^2_h(v_+)=v_-\otimes v_+,\\[3pt]
\Delta^0_h(v_-)=0,   & \Delta^1_h(v_-)=v_-\otimes v_-, & \Delta^2_h(v_-)=v_-\otimes v_-.
\end{array}\qquad
\end{equation}
\CloseDef
\end{definition}

\begin{theorem}[\cite{Manturovb}]
	The homology of \( \vckh_h ( D ) \) with respect to \( d_h \) is an invariant of \( L \), and is denoted \( \vkh_h ( L ) \).
\end{theorem}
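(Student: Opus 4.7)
The plan is to verify two ingredients in order: first that $d_h$ is genuinely a differential, and then that the homology is invariant under Reidemeister moves. Since the chain spaces of $\vckh_h(D)$ coincide with those of $\vckh(D)$ and $d_h$ is obtained from the virtual Khovanov differential by discarding every component that fails to preserve the $h$-grading defined in \eqref{Eq:homotopygrading}, much of the argument can be bootstrapped from the corresponding proof for $\vkh$, with extra bookkeeping of the classes in $\gH$.

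To establish $d_h \circ d_h = 0$, I would examine the faces of the cube of resolutions. Each face corresponds to changing two crossings and so produces four smoothings that agree outside a localised region of $\Sigma_g$; in particular the homotopy classes of all circles disjoint from that region are common to all four corners of the face. The $h$-preservation condition on a composite $m_h$ or $\Delta_h$ around the face therefore depends only on the classes of the circles meeting the region, and a short case analysis using the formulae \eqref{Eq:mh2} and \eqref{Eq:Dh2}—distinguishing whether each relevant arc closes up into a contractible or non-contractible loop—shows that the $h$-graded components satisfy the same Frobenius-style cancellations that make the underlying $\vckh$ differential square to zero. Faces involving a single-cycle smoothing contribute trivially because $\eta = 0$ in $\vckh_h$.

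For Reidemeister invariance I would follow the Bar-Natan framework: for each classical and virtual Reidemeister move, exhibit an explicit chain homotopy equivalence between the complexes of the before-and-after diagrams. The key observation making this work is that the small circles created or destroyed inside the region of a Reidemeister move are contractible in $\Sigma_g$, so they represent the identity element of $\gH$ and contribute $0$ to $h$. Consequently, any merge or split between such a local circle and a circle passing through the region reduces to the classical $m$ or $\Delta$ (the $m_h^i, \Delta_h^i$ cases with $[C]=\bigcirc$ in \eqref{Eq:mh1}, \eqref{Eq:Dh1}), so the unit and counit morphisms $\iota, \epsilon$ employed by Bar-Natan are $h$-graded of degree $0$ and the standard chain homotopies lift verbatim. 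The purely virtual Reidemeister moves are immediate because they do not alter the resolutions or circles at all, hence preserve the $h$-grading tautologically.

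The main obstacle I expect is Reidemeister III, especially the mixed versions in which one or more of the strands interacts with a non-trivial homotopy class, and the variants involving virtual crossings. Here the local tangle can see circles whose $h$-class is genuinely non-trivial, and one must check that the Bar-Natan cancellation between the two sides of the move is not spoiled by the replacement of some occurrences of $m, \Delta$ by the restricted $m_h^i, \Delta_h^i$ or by the vanishing of $\eta$. The cleanest route is to stratify the resolutions of the tangle according to which of the incident arcs close up through contractible loops outside, and to verify the chain homotopy identity stratum by stratum; within each stratum the $h$-grading is constant along all maps involved, so the classical cancellation applies. Assembling the strata then yields the required equivalence.
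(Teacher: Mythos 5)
Your strategy is essentially the one this paper uses: the theorem itself is only cited here from \cite{Manturovb}, but the in-paper proof of the analogous statement for the cohomological grading (\Cref{Thm:dothominvariance}) rests on exactly your key observation, namely that every circle created or destroyed inside the local region of a Reidemeister move is contractible, hence trivial in \( \gH \), so the standard Bar-Natan chain homotopies (driven by the surjectivity of \( m \) and injectivity of \( \Delta \), which your check of \( m^1_h(\,\cdot\,\otimes v_+) = \mathrm{id} \) and \( (\mathrm{id}\otimes\epsilon)\circ\Delta^1_h = \mathrm{id} \) confirms survive the restriction) carry over verbatim. Your additional face-by-face verification that \( d_h\circ d_h = 0 \) is a necessary supplement not spelled out in this paper, since \( d_h^2=0 \) does not follow formally from \( d^2=0 \) when the discarded components shift the \( \gH \)-grading in both directions.
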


\Cref{Fig:hgrad21} depicts the homology of the knot given in \Cref{Fig:21lift}, split by homological, quantum, and homotopic degrees.

\begin{figure}
	\begin{tikzpicture}[scale=0.65]
	
	\node[] ()at(-0.5,3.5) { $ + \left[ \raisebox{-10pt}{\includegraphics[scale=1]{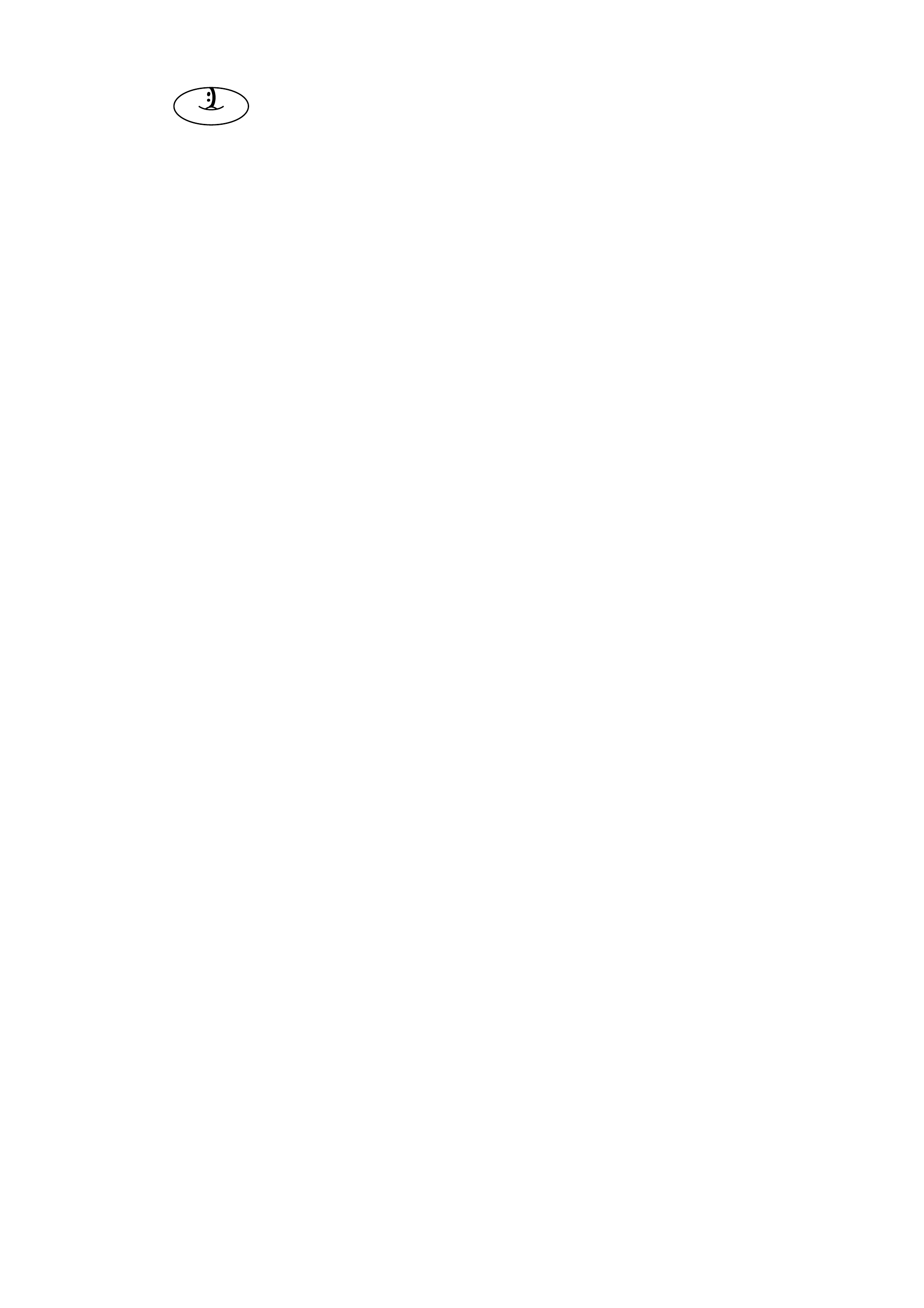}} \right] $ };
	
	\draw[black, ->] (-3.5,-4) -- (3.5,-4) ;
	\draw[black, ->] (-3.5,-4) -- (-3.5,2.5) ;
	
	\draw (-2,-3.8) -- (-2,-4.2) node[below] {\small $-2$} ;
	\draw (0,-3.8) -- (0,-4.2) node[below] {\small $-1$} ;
	\draw (2,-3.8) -- (2,-4.2) node[below] {\small $0$} ;
	
	\draw (-3.3,-3) -- (-3.7,-3) node[left] {\small $-6$} ;
	\draw (-3.3,-1.5) -- (-3.7,-1.5) node[left] {\small $-4$} ;
	\draw (-3.3,0) -- (-3.7,0) node[left] {\small $-2$} ;
	\draw (-3.3,1.5) -- (-3.7,1.5) node[left] {\small $0$} ;
	
	\node[] ()at(-2,-1.5) {$ \Z $} ;
	
	\node[] ()at(0,0) {$ \Z $} ;
	
	\node[] ()at(9.5,3.5) { $ - \left[ \raisebox{-10pt}{\includegraphics[scale=1]{alpha.pdf}} \right] $ };
	
	\draw[black, ->] (6.5,-4) -- (13.5,-4) ;
	\draw[black, ->] (6.5,-4) -- (6.5,2.5) ;
	
	\draw (8,-3.8) -- (8,-4.2) node[below] {\small $-2$} ;
	\draw (10,-3.8) -- (10,-4.2) node[below] {\small $-1$} ;
	\draw (12,-3.8) -- (12,-4.2) node[below] {\small $0$} ;
	
	\draw (6.7,-3) -- (6.3,-3) node[left] {\small $-6$} ;
	\draw (6.7,-1.5) -- (6.3,-1.5) node[left] {\small $-4$} ;
	\draw (6.7,0) -- (6.3,0) node[left] {\small $-2$} ;
	\draw (6.7,1.5) -- (6.3,1.5) node[left] {\small $0$} ;
	
	\node[] ()at(8,-3) {$ \Z $} ;
	
	\node[] ()at(10,-1.5) {$ \Z $} ;
	
	\node[] ()at(-0.5,-6.5) { $ + \left[ \raisebox{-10pt}{\includegraphics[scale=1]{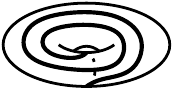}} \right] $ };
	
	\draw[black, ->] (-3.5,-14) -- (3.5,-14) ;
	\draw[black, ->] (-3.5,-14) -- (-3.5,-7.5) ;
	
	\draw (-2,-13.8) -- (-2,-14.2) node[below] {\small $-2$} ;
	\draw (0,-13.8) -- (0,-14.2) node[below] {\small $-1$} ;
	\draw (2,-13.8) -- (2,-14.2) node[below] {\small $0$} ;
	
	\draw (-3.3,-13) -- (-3.7,-13) node[left] {\small $-6$} ;
	\draw (-3.3,-11.5) -- (-3.7,-11.5) node[left] {\small $-4$} ;
	\draw (-3.3,-10) -- (-3.7,-10) node[left] {\small $-2$} ;
	\draw (-3.3,-8.5) -- (-3.7,-8.5) node[left] {\small $0$} ;
	
	\node[] ()at(2,-8.5) {$ \Z $} ;
	
	\node[] ()at(9.5,-6.5) { $ - \left[ \raisebox{-10pt}{\includegraphics[scale=1]{beta.pdf}} \right] $ };
	
	\draw[black, ->] (6.5,-14) -- (13.5,-14) ;
	\draw[black, ->] (6.5,-14) -- (6.5,-7.5) ;
	
	\draw (8,-13.8) -- (8,-14.2) node[below] {\small $-2$} ;
	\draw (10,-13.8) -- (10,-14.2) node[below] {\small $-1$} ;
	\draw (12,-13.8) -- (12,-14.2) node[below] {\small $0$} ;
	
	\draw (6.7,-13) -- (6.3,-13) node[left] {\small $-6$} ;
	\draw (6.7,-11.5) -- (6.3,-11.5) node[left] {\small $-4$} ;
	\draw (6.7,-10) -- (6.3,-10) node[left] {\small $-2$} ;
	\draw (6.7,-8.5) -- (6.3,-8.5) node[left] {\small $0$} ;
	
	\node[] ()at(12,-10) {$ \Z $} ;
	
	\end{tikzpicture}
	\caption{Let \( K \) denote the knot depicted on the right of \Cref{Fig:21lift}. Above is \( \vkh_h ( K) \), split by homological, quantum, and homotopic grading. Each grid depicts the elements which are homogeneous in the homotopic grading, and their degree is given above the grid.}
	\label{Fig:hgrad21}
\end{figure}

\subsection{Maps on \( \vkh_h \)}\label{Subsec:vkhmaps}
We can assign maps on \( \vkh_h \) to cobordisms also, in essentially the same manner as in \Cref{Subsec:dkhprimemaps}; the maps assigned to \(1\)-handles are those given in \Cref{Eq:mh1,Eq:mh2,Eq:Dh1,Eq:Dh2}, and those assigned to \(0\)- and \(2\)-handles are defined, respectively, as
\begin{equation}\label{Eq:vkh0}
	\begin{aligned}
		\iota :~ &\mathbb{Q} \rightarrow \mathcal{A}\\
		 1 &\mapsto v_+
	\end{aligned}
\end{equation}
and
\begin{equation}\label{Eq:vkh2}
\begin{aligned}
\epsilon :~ &\mathcal{A} \rightarrow \mathbb{Q}\\
v_+ &\mapsto 0 \\
v_- &\mapsto 1
\end{aligned}
\end{equation}

The maps assigned to elementary cobordisms are \(h\)-graded of degree 0, by construction. They are quantum graded of degree \(+1\) and \(-1\), for \(0\)-/\(2\)-handles and \(1\)-handles, respectively.

With this in mind, given a cobordism \( S \) between links \( L_1 \) and \( L_2 \), and decomposition of it into elementary cobordisms, we define the map \( \phi_S : \vkh_h ( L_1 ) \rightarrow \vkh_h ( L_2 ) \) associated to \( S \) to be the composition of the maps assigned to the elementary cobordisms which make up the composition.

\begin{figure}
	\includegraphics[scale=0.4]{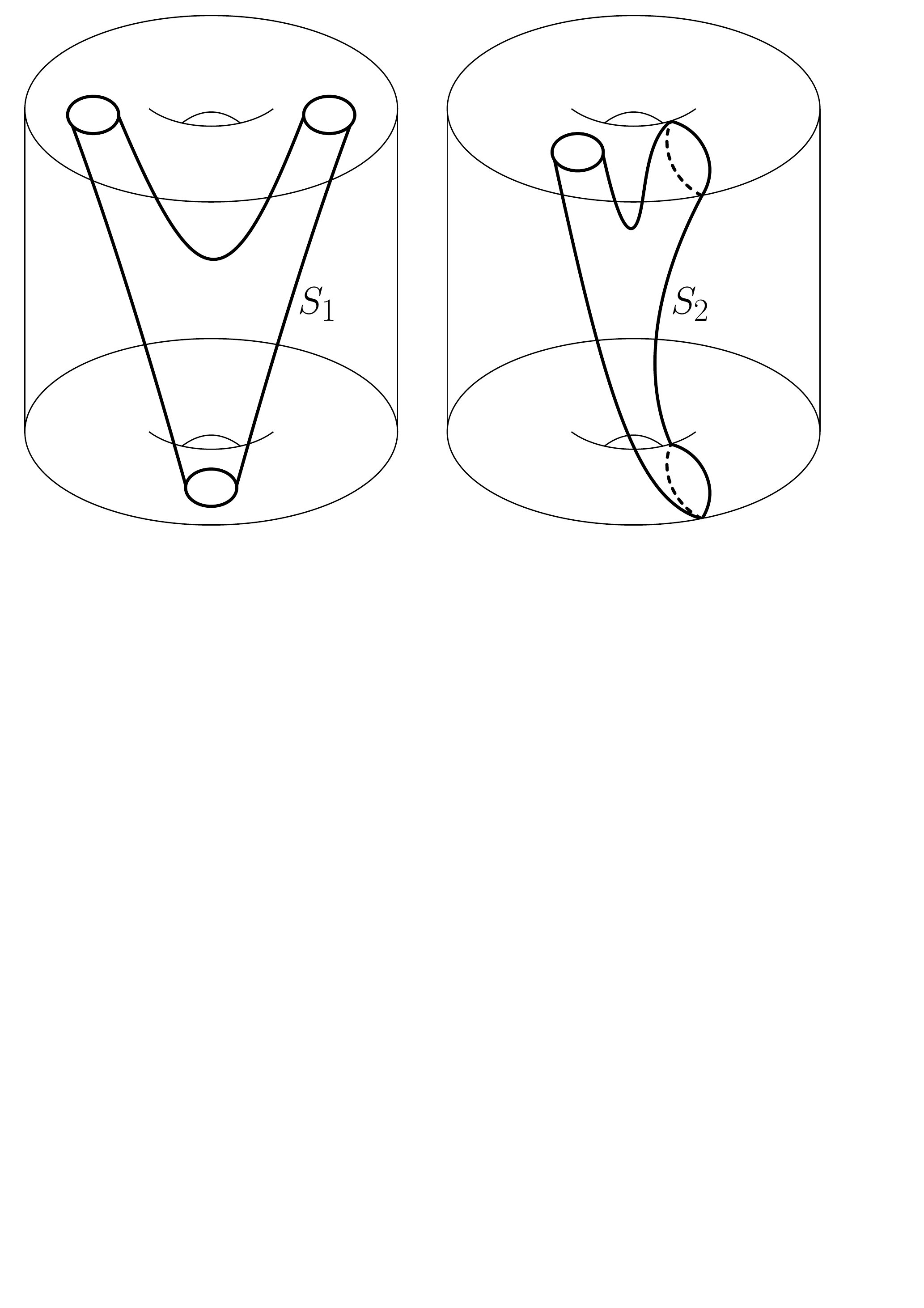}
	\caption{Two cobordisms in \( \Sigma_1 \times I \times I \).}
	\label{Fig:twopants}
\end{figure}

We conclude this section by giving an example of a cobordism which is assigned a non-trivial map on \( \vkh_h \). Consider the cobordisms depicted in \Cref{Fig:twopants}. They are both \(1\)-handle additions, and the map associated to \( S_1 \) (on the left of the figure) is \( m \), while that associated to \( S_2 \) is either \( m^1_h \) or \( m^2_h \) (depending on the ordering of the circles), as defined in \Cref{Eq:mh1}.

Further, the maps associated to \( S_1 \) and \( S_2 \) decompose according to the \(h\)-gradings of their domain and codomain. Consult the grids in \Cref{Fig:pantsgrids}: they depict the domains and codomains of \( \phi_{S_1} \) and \( \phi_{S_2} \), with quantum grading on the vertical axes, and \( h \)-grading on the horizontal axes (the homological degree of every generator is \(0\)); for convenience we denote \( \left[ \raisebox{-3pt}{\includegraphics[scale=0.4]{alpha.pdf}} \right] \) by \( \alpha \).

\begin{figure}
	\begin{tikzpicture}[scale=0.75]
	\draw[black, ->] (-10,-4) -- (-5.5,-4) ;
	\draw[black, ->] (-9.5,-4.5) -- (-9.5,1.5) ;
	
	\draw (-7.5,-3.75) -- (-7.5,-4.25) node[below] {\small \( 0 \)} ;
	
	\draw (-9.25,-3) -- (-9.75,-3) node[left] {\small $-2$} ;
	\draw (-9.25,-2) -- (-9.75,-2) node[left] {\small $-1$} ;
	\draw (-9.25,-1) -- (-9.75,-1) node[left] {\small $0$} ;
	\draw (-9.25,0) -- (-9.75,0) node[left] {\small $1$} ;
	\draw (-9.25,1) -- (-9.75,1) node[left] {\small $2$} ;
	
	\node[] ()at(-7.5,1) {$ \Z $} ;
	
	\node[] ()at(-7.5,-1) {$ \Z \oplus \Z $} ;
	
	\node[] ()at(-7.5,-3) {$ \Z $} ;

	\draw[black, ->] (-2,-4) -- (2.5,-4) ;
	\draw[black, ->] (-1.5,-4.5) -- (-1.5,1.5) ;
	
	\draw (0.5,-3.75) -- (0.5,-4.25) node[below] {\small \( 0 \)} ;
	
	\draw (-1.25,-3) -- (-1.75,-3) node[left] {\small $-2$} ;
	\draw (-1.25,-2) -- (-1.75,-2) node[left] {\small $-1$} ;
	\draw (-1.25,-1) -- (-1.75,-1) node[left] {\small $0$} ;
	\draw (-1.25,0) -- (-1.75,0) node[left] {\small $1$} ;
	\draw (-1.25,1) -- (-1.75,1) node[left] {\small $2$} ;
	
	\node[] ()at(0.5,-2) {$ \Z $} ;
	\node[] ()at(0.5,0) {$ \Z $} ;
	
	\draw[black, ->] (-2,-10.5) -- (2.5,-10.5) ;
	\draw[black, ->] (-1.5,-11) -- (-1.5,-5) ;
	
	\draw (-0.5,-10.25) -- (-0.5,-10.75) node[below] {\( - \alpha \)} ;
	\draw (1.5,-10.25) -- (1.5,-10.75) node[below] {\( + \alpha \)} ;
	
	\draw (-1.25,-9.5) -- (-1.75,-9.5) node[left] {\small $-2$} ;
	\draw (-1.25,-8.5) -- (-1.75,-8.5) node[left] {\small $-1$} ;
	\draw (-1.25,-7.5) -- (-1.75,-7.5) node[left] {\small $0$} ;
	\draw (-1.25,-6.5) -- (-1.75,-6.5) node[left] {\small $1$} ;
	\draw (-1.25,-5.5) -- (-1.75,-5.5) node[left] {\small $2$} ;
	
	\node[] ()at(-0.5,-8.5) {$ \Z $} ;
	\node[] ()at(1.5,-6.5) {$ \Z $} ;
	
	\draw[black, ->] (-10,-10.5) -- (-5.5,-10.5) ;
	\draw[black, ->] (-9.5,-11) -- (-9.5,-5) ;
	
	\draw (-8.5,-10.25) -- (-8.5,-10.75) node[below] {\( - \alpha \)} ;
	\draw (-6.5,-10.25) -- (-6.5,-10.75) node[below] {\( + \alpha \)} ;
	
	\draw (-9.25,-9.5) -- (-9.75,-9.5) node[left] {\small $-2$} ;
	\draw (-9.25,-8.5) -- (-9.75,-8.5) node[left] {\small $-1$} ;
	\draw (-9.25,-7.5) -- (-9.75,-7.5) node[left] {\small $0$} ;
	\draw (-9.25,-6.5) -- (-9.75,-6.5) node[left] {\small $1$} ;
	\draw (-9.25,-5.5) -- (-9.75,-5.5) node[left] {\small $2$} ;
	
	\node[] ()at(-8.5,-9.5) {$ \Z $} ;
	\node[] ()at(-8.5,-7.5) {$ \Z $} ;
	\node[] ()at(-6.5,-7.5) {$ \Z $} ;
	\node[] ()at(-6.5,-5.5) {$ \Z $} ;
	
	\draw[black, ->] (-5,-1) -- (-3,-1) node[above, pos=0.5] {\( \phi_{S_1} \)};
	\draw[black, ->] (-5,-7.5) -- (-3,-7.5) node[above, pos=0.5] {\( \phi_{S_2} \)};
	\end{tikzpicture}
	\caption{The maps assigned to the two cobordisms depicted in \Cref{Fig:twopants}, split by the homotopic grading (horizontal axis) and quantum grading (vertical axis); all generators are at homological degree \(0\).}
	\label{Fig:pantsgrids}
\end{figure}
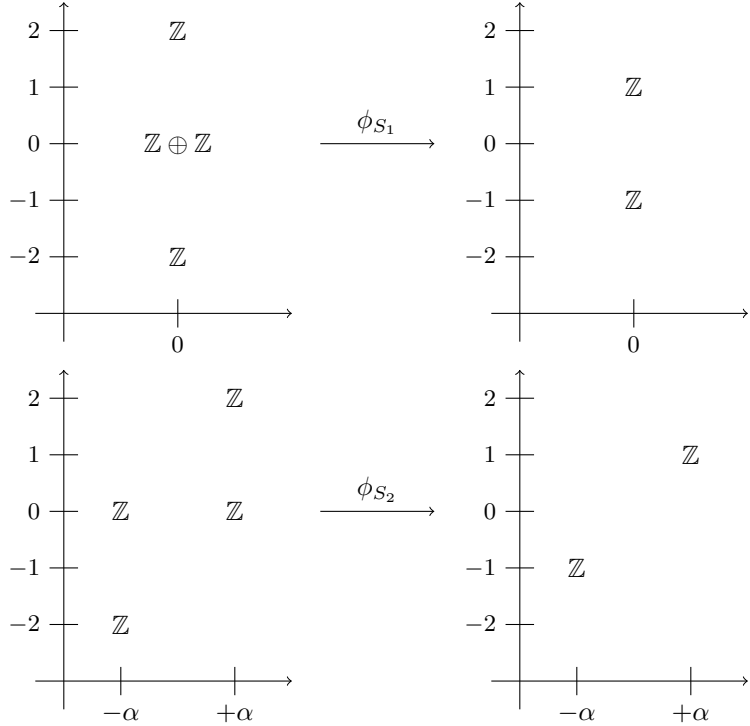

We see that \( \phi_{S_2} \) may be decomposed as \( \phi_{S_2} = \phi^{+\alpha}_{S_2} + \phi^{-\alpha}_{S_2} \), where \( \phi^{i}_{S_2} \) is supported in \(h\)-degree \(i\). In this case, \( \phi^{+\alpha}_{S_2} \) and \( \phi^{-\alpha}_{S_2} \) are both non-zero. It is clear that \( \phi_{S_1} \) does not permit such a decomposition into two non-zero maps; in \Cref{Subsec:embeddedsurfaces} we use this to show that \( S_2 \) and \( S_1 \) are not isomorphic (relative their boundaries).

\subsection{Picture-valued invariants of surfaces}\label{Subsec:embeddedsurfaces}
In this section we use the homotopic grading defined in \Cref{Subsec:homotopygradings} to produce picture-valued invariants of embedded surfaces. Specifically, to a cobordism between links in a thickened surface we can associate a map between their homologies; this map is invariant under isotopy of the cobordism relative to the boundary (in direct analogy to the case of classical Khovanov homology).

\begin{theorem}\label{Thm:picturevalued}
	Let \( S \hookrightarrow \Sigma_g \times I \times I \) be a cobordism between links \( L, L' \hookrightarrow \Sigma_g \times I \). Then the map \( \phi_S : \vkh_h ( L ) \rightarrow \vkh_h ( L' ) \) is an invariant of \( S \) up to isotopy relative the boundary.
\end{theorem}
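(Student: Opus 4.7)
The plan is to follow the template of the Jacobsson--Khovanov--Bar-Natan invariance proof for classical Khovanov cobordism maps, adapted to the \( h \)-graded setting. By the Carter--Saito theorem, any two decompositions of \( S \) into elementary cobordisms (relative the boundary) that yield isotopic surfaces are related by a finite sequence of \emph{movie moves}. Hence it suffices to verify that \( \phi_S \) is unchanged under each movie move, and first one must also check well-definedness: that \( \phi_S \) does not depend on the choice of representative diagrams for the intermediate links in a fixed decomposition.

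First I would dispatch the movie moves that involve only Reidemeister moves and planar isotopies occurring away from the handle additions. For these, one uses that \( \vkh_h \) is Reidemeister invariant: the chain homotopy equivalences produced in the proof of invariance of \( \vkh_h \) (the theorem of Manturov cited above) can be chosen so as to intertwine the maps \( \iota, \epsilon, m_h, \Delta_h \) assigned to elementary cobordisms, up to chain homotopy. This step is essentially formal once one checks that the invariance chain homotopies for \( \vkh_h \) are supported locally and so commute with handle additions occurring elsewhere in the diagram.

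The remaining movie moves involve reordering, cancellation, or recombination of handle additions. In the classical (non-\( h \)-graded) setting each such move amounts to an algebraic identity in the Frobenius algebra \( \mathcal{A} \): commutativity and associativity of \( m \), cocommutativity and coassociativity of \( \Delta \), the Frobenius identity \( (m \otimes \mathrm{id}) \circ (\mathrm{id} \otimes \Delta) = \Delta \circ m \), and the unit/counit relations involving \( \iota \) and \( \epsilon \). For the \( h \)-graded theory, I would verify the analogous identities for the split maps \( m^i_h, \Delta^i_h \) by reading off cases from \eqref{Eq:mh1}--\eqref{Eq:Dh2}, decomposing each side of the identity according to the possible homotopy classes \( [C_i] \) carried by the circles that appear in the local picture.

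The main obstacle will be the bookkeeping in the movie moves that involve several saddles in close proximity, such as the saddle reordering move and the Frobenius (``pair of pants'') movie move. There, the homotopy class assigned to an intermediate circle depends on the order in which neighbouring saddles are performed, so one must prove the invariant-theoretic statement that the multiset of homotopy classes realised by the circles in a fixed local configuration is determined by the topology of the configuration on \( \Sigma_g \), not by the order of handle attachment. Granting this, each movie move reduces on each \( h \)-graded summand to a single case of the classical Frobenius identity; combined with the fact that all elementary cobordism maps are \( h \)-graded of degree \( 0 \), this shows that \( \phi_S \) decomposes compatibly and is preserved by every movie move, completing the proof.
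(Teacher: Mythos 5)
Your proposal takes essentially the same route as the paper's proof, which simply invokes Jacobsson's movie-move argument together with the reorderability of distinct Morse critical points. Your outline is in fact considerably more detailed than the published one-line sketch, and correctly identifies the \( h \)-graded Frobenius-type identities for \( m^i_h \) and \( \Delta^i_h \) as the substantive new verification required.
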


\begin{proof}
	The proof follows that of Jacobsson in the classical case \cite{Jacobsson2004}. Specifically, one checks that distinct Morse critical points within a cobordism can be reordered, and that the construction of the map \( \phi_S \) satisfies the movie moves.
\end{proof}

Let us give some context for these picture-valued invariants, and discuss their potential applications. In \cite{Manturov2010a} the  \emph{parity bracket} is constructed. The main feature of the parity bracket, denoted \( [ \cdot ] \), is the property \emph{if a diagram is complicated enough then it realizes itself}. Namely, for an irreducible odd diagram $K$, the bracket $[K]$ equals $K$ itself meaning that for each $K'$ equivalent to $K$ we have $[K']=K$. By construction, the bracket $[K']$ contains only diagrams obtained from $K'$ by smoothings. This means that $K$ itself can be obtained from $K'$ by smoothings i.e.\ $K$ is contained \emph{inside \(K\)}.

A similar construction can be considered by taking coefficients in homotopy classes. Namely, when considering the formula for the Kauffman bracket
\begin{equation*}
	\langle K \rangle = \sum_{s} K_{s} \cdot X,
\end{equation*}
one can incorporate the homotopical information into this formula as \( X \).

Such an invariant bracket has many far-reaching consequences. If \( \langle K \rangle \) has a summand $L$ with a non-trivial coefficient, where $L$ represents some non-trivial homotopy class (say, in a thickened surface of a given genus) then such a curve  will persist in any representative of $K$.

In \cite{Manturovb} and \Cref{Subsec:homotopygradings}, the homotopy information is raised to the grading level. In this case, we see that \emph{if the homology of $K$ has a non-trivial group in $L$-grading then $L$ persists in any diagram representing a knot equivalent to $K$}.

This has some deep geometrical consequences, e.g., for a surface of a given genus, because for some free homotopy classes we can make estimates on the number of crossing points from below etc.

In \Cref{Thm:picturevalued}, we raise this picture-valued information to a higher level. Namely, having two knots $K_{1}$ and $K_{2}$ in a fixed $\Sigma_{g}\times I$ and a cobordism between them in $\Sigma_{g}\times I\times I$, we can track the homology groups of $K_{1}$ and of $K_{2}$ with picture-valued gradings. It may happen that these two knots $K_{1}$ and $_{2}$ share non-trivial homotopical (picture-valued) grading $L$. Moreover, it may happen that in a cobordism $S$ between $K_{1}$  and $K_{2}$ the map in this homotopical grading is non-trivial. As a consequence of \Cref{Thm:picturevalued}, this will mean the existence of a ``cylinder'' with homotopy class $L$.

Now, looking at $S$ as a $2$-knot with boundary, we can make a consequence that every $2$-knot with such boundary will possess similar properties; that is, the existence of cylinders.

\section{Further work}\label{Sec:furtherwork}
There are a number of directions for further work based on the material of this paper. Here we list a few of them:

\begin{enumerate}[(i)]
	\item Produce a structure theorem for the homology theories \( \mathcal{H} ( L , c ) \) and \( \dkh' ( L , c ) \) defined in \Cref{Subsec:spectralsequences} (the homology theories related to \( \dkh ( L , c ) \) by a spectral sequence in the \(c\)-grading and \(j\)-grading, respectively). These theories appear as intermediate stages between \( \dkh ( L , c ) \) and \( \dkh'' ( L , c ) \), but do not behave like either of those theories. In particular, can their structure be expressed combinatorially as in the case of doubled Lee homology, the rank of which is determined by a combinatorialy property of the argument link? Understanding the structure of \( \mathcal{H} ( L , c ) \) and \( \dkh' ( L , c ) \) could yield new concordance invariants, or could further our understanding of \( \dkh'' ( L , c ) \), for example.
	\item Use the cohomological decoration of \Cref{Sec:gradingdef} to produce a new homology theory of links in thickened surfaces: rather than doubling the module assigned to every circle, as in the standard definition of doubled Khovanov homology, is it possible to double the modules assigned to dotted circles, and leave the others unchanged? It is conceivable that such a theory would be more well-suited to links in thickened surfaces, as it would naturally contain information regarding how the link is knotted about the surface, in addition to how it is knotted about itself.
	\item Use the obstructions to the existence of discs bounding knots given in \Cref{Subsec:dotteddoubledinvariants} to produce new slice obstructions for virtual knots. There are challenges to doing so: the invariants of defined in \Cref{Sec:gradingdef} do not extend to virtual links, as the virtual Reidemeister moves can change the genus of the surface into which a representative is embedded.
	\item Related to the picture-valued invariants of \Cref{Sec:homotopygrading}: construct a Khovanov homology theory of free knots. There is a polynomial invariant of free knots \cite{Manturov2010}, can it be categorified?
\end{enumerate}

\bibliographystyle{plain}
\bibliography{library}

\end{document}